\newtheorem{Theo}{Theorem}
\newtheorem{Def}{Definition}
\newtheorem{Prop}{Proposition}
\newtheorem{Cor}{Corollary}
\newtheorem{Lem}{Lemma}
\newtheorem{Rema}{Remark}
\newenvironment{Rem}{\begin{Rema} \begin{upshape}} {\end{upshape}\end{Rema}}
\newtheorem*{Pf}{Proof}
\newenvironment{Proof}{\begin{Pf} \begin{upshape}} {\end{upshape} \qed\end{Pf}}
\newcommand\beqa[1]{ \begin{eqnarray} \label{#1}}
\newcommand{\eeqa}{ \end{eqnarray} }
\newcommand{\beqano}{ \begin{eqnarray*} }
\newcommand{\eeqano}{ \end{eqnarray*} }
\newcommand{\T}{ {\mathbb T}   }
\newcommand{\R}{ {\mathbb R}   }
\newcommand{\Z}{ {\mathbb Z}   }
\renewcommand{\H}{ {\mathbb H}   }
\renewcommand \a {\alpha}
\newcommand \tx {\tilde{x}}
\newcommand \e {\varepsilon }
\renewcommand \d {\delta}
\newcommand \D{\Delta}
\newcommand \m {\mu}
\newcommand \g {\gamma}
\newcommand \G {\Gamma}
\newcommand \s {\sigma}
\renewcommand \S {\Sigma}
\renewcommand \th {\theta}
\renewcommand \l {\lambda}
\newcommand \tz {{\tilde{z}}}
\renewcommand \th {{\tilde{h}}}
\newcommand \tM {{\widetilde{M}}}
\newcommand \tH {{\widetilde{H}}}
\newcommand \uY {{\underline{Y}}}
\newcommand \uZ {{\underline{Z}}}
\newcommand \tPsi {{\widetilde{\Psi}}}
\newcommand \cA {{\mathcal A}}
\newcommand \cY {{\mathcal Y}}
\newcommand \cH {{\mathcal H}}
\newcommand \cS {{\mathcal S}}
\newcommand{\pa}{{p_{\alpha}}}
\newcommand{\pb}{{p_{\beta}}}
\newcommand{\pc}{{p_{\gamma}}}
\newcommand \calM {{\mathfrak M}}
\newcommand \calC {{\mathfrak C}}
\def\ie{\hbox{\it i.e.\ }}
\newcommand \rH {{\rm H}}
\newcommand \rT {{\rm T}}
\newcommand \supp {{\rm supp\ }}
\newcommand \dist {{\rm dist\ }}
\begin{document}

\title[Symplectic and contact properties of the Ma\~n\'e critical value]{Symplectic and contact properties of the Ma\~n\'e critical value of the universal cover}

\author{Gabriel P. Paternain and Alfonso Sorrentino}
\address{Department of Mathematics and Mathematical Statistics, University of Cambridge, Wilberforce Road, Cambridge CB3 0WB, United Kingdorm.}
\email{g.p.paternain@dpmms.cam.ac.uk}
\address{Dipartimento di Matematica e Fisica, Sezione di Matematica, Universit\`a degli Studi Roma Tre, Largo San Leonardo Murialdo1, 00146 Rome, Italy.}
\email{sorrentino@mat.uniroma3.it}
\date{\today}
\subjclass[2010]{37J50, 37J05 (primary);  37J55, 53D05, 53D10 (secondary).}

\begin{abstract} We discuss several symplectic aspects related to the Ma\~n\'e critical value $c_u$ of the universal cover of a Tonelli Hamiltonian. In particular we show that the critical energy level is never of virtual contact type  for manifolds of dimension greater than or equal to three.
We also show the symplectic invariance of the finiteness of the Peierls barrier and the Aubry set of the universal cover.  We also provide an example where $c_u$ coincides with the infimum of Mather's $\alpha$ function but the Aubry set of the universal cover is empty and the Peierls barrier is finite.
A second example exhibits all the ergodic invariant minimizing measures with zero homotopy, showing, quite surprinsingly, that the union of their supports is not a graph, in contrast with Mather's celebrated graph theorem.

\end{abstract}


\maketitle

\section{Introduction} 

Let $M$ be a compact connected manifold of dimension $n$ and let us consider  an autonomous Hamiltonian $H: \rT^*M \longrightarrow \R$, which is $C^2$, strictly convex ({\it i.e.}, with positive definite hessian) and superlinear in each fiber,  and the corresponding Lagrangian system $L: \rT M \longrightarrow \R$, which is defined by Fenchel-Legendre duality. Hamiltonians and Lagrangians of this kind are often said to be of ``{\it Tonelli type}''.  

Since the seminal works by John Mather \cite{Mat91, Mat93}, Ricardo Ma\~n\'e \cite{Mane91} and Albert Fathi \cite{Fathibook}, much effort has been spent in order to study the dynamics of these systems and their symplectic properties, both using variational methods -- the so--called {\it principle of least action} -- and their analytical counterpart, in the form of viscosity solutions and subsolutions of Hamilton-Jacobi equations. See for instance, just to mention a few references, \cite{CIPP98,  CI99, C01, CP02, FathiSiconolfi, FathiMaderna, MassartSorrentino, PP97, SorTAMS, ButlerSorrentino}.

The energy values on which these methods can be applied are called {\it Ma\~n\'e  critical values}. These values appear in several different contexts and can be defined and interpreted in many interesting ways, each reflecting and encoding a distinct dynamical or symplectic significance (see for instance \cite{B07, CIPP98, PPS, SorViterbo}). 

This work aims at advancing further the work in \cite{CFP}, where the authors thoroughly analyzed the relation between these critical values and the symplectic topology of the corresponding energy hypersurfaces. More specifically, they focused on understanding how the dynamical, symplectic and contact properties of the regular energy levels of the Hamiltonian change when one passes through some Ma\~n\'e  critical value. 
Here we will focus more on the Peierls barrier and the Aubry set.

\subsection{Ma\~n\'e critical values}\label{sec1.2}

In the case of a compact $M$, a crucial idea behind the definition of these critical values is the following observation.  If one modifies the Lagrangian (and consequently the corresponding Hamiltonian) by subtracting a closed $1$-form, then it is easy to verify that while this does not affect the Euler-Lagrange flow of the system, nevertheless it has a substantial impact on its {\it action-minimizing} properties.  More specifically, if  $\eta$ denotes a closed $1$-form on $M$, and $L_{\eta}(x,v) := L(x,v)- \langle \eta(x),\,v\,\rangle$  and $H_{\eta}(x,p)=H(x,\eta(x)+p)$ are respectively the modified Lagrangian and Hamiltonian, then the corresponding {\it Ma\~n\'e  critical value} $c(H_{\eta})$ can be defined  in many equivalent ways (see also \cite{SorLectNotes}):

\begin{itemize}
\item[1)] {\sc Variational definition I}: if $\calM_{L}$ denotes the set of invariant probability measures of $L$ (hence of $L_{\eta}$), then:
\beqano
c(H_{\eta}) := - \min_{\m \in \calM_L} \int_{\rT M} L_{\eta}(x,v)\, d\m\,,
\eeqano
{\it i.e.}, it is the opposite of the {\it minimal average $L_{\eta}$-action} of invariant probability measures of the Euler-Lagrange flow of L  (see \cite{Mat91}). A measure which realizes this minimum is called an {\it action--minimizing} (or {\it Mather's}) measure of cohomology class $[\eta]$.\\

\item[2)] {\sc Variational definition II}: for any absolutely continuous curve (abs. cont.) $\g:[a,b]\longrightarrow M$, we define its {\it $L$-action} as
$$A_{L}(\g):= \int_a^b L(\g(t),\dot{\g}(t))\,dt.$$ 
Then (see \cite{Mane91, CI99}):
\beqano
c(H_{\eta})&:=&\inf  \{k\in\R:\; A_{L_{\eta}+k}(\g)\geq 0, \quad \forall\;\mbox{abs. cont. loop}\; \g\} \nonumber\\
&=&\sup\{k\in\R:\;  A_{L_{\eta}+k}(\g)< 0 \;  \mbox{for some abs. cont. loop}\; \g \}\,. \\
\eeqano

\item[3)] {\sc Hamiltonian definition}: in \cite{Carneiro} Dias Carneiro proved that $c(H_{\eta})$ represents the energy of  action-minimizing measures of cohomology class $[\eta]$, {\it i.e. } the energy level on which they are supported.\\

\item[4)] {\sc Symplectic definition}: it was proved in \cite{PPS} that  $c(H_{\eta})$ represents the infimum of the energy values $k$'s such that the energy sublevel $\{H(x,p)\leq k\}$ contains a smooth Lagrangian graph of cohomology class $[\eta]\in H^1(M;\R)$. In particular, it corresponds to the smallest energy sublevel containing Lipschitz Lagrangian graphs of cohomology class $[\eta]$.\\

\item[5)] {\sc PDE definition:} $c(H_{\eta})$ is the unique $k$ for which Hamilton-Jacobi equation $H(x,\eta(x)+p)=k$ admits viscosity solutions \cite{Fathibook}.\\

\end{itemize}

The above definitions (and many others) are all equivalent (in the compact case) and it turns out -- as it could be easily evinced for instance from item (4) -- that they only depend on the cohomology class of $\eta$ and not on the particular representative that has been chosen.
Moreover, these values are somehow symplectic invariant, in the sense that they are  invariant under the action of exact symplectomorphisms \cite{B07}; non-exact symplectomorphisms  do preserve the set of values $\{c(H_{\eta})\}_{[\eta]\in H^1(M;\R)}$, but they do affect the corresponding cohomology classes: they essentially act as a ``translation'' in the parameter ({\it i.e.}, the cohomology class).\\

It is rather useful to consider all of these values as a function on $\rH^1(M;\R)$:
\beqano
\a: H^1(M;\R) &\longrightarrow & \R \\
c & \longmapsto & c(H_{\eta_c}),
\eeqano
where $\eta_c$ represents any closed $1$-form of cohomology class $c$. 
This function, which is usually called {\it Mather's $\a$-function}, turns out to  be convex and superlinear (see \cite{Mat91}) and it surprisingly behaves as a sort of  ``effective Hamiltonian'' for the system; moreover, its regularity and strict convexity properties encode many interesting aspects of the dynamics of the system \cite{Massart, MassartSorrentino, SorViterbo}. \\

The minimum of this function, which is usually denoted  by $c_0(H)$, is called {\it Mane's strict critical value}.
This value is a symplectic invariant (not only for exact symplectomorphisms) \cite{PPS}: it corresponds to the largest energy sublevel that does not contain in its interior any Lagrangian submanifold Hamiltonianly isotopic to the zero section. In particular, observe that $c_0(H)$ represents the lowest energy level in which  these variational methods (known as {\it Aubry--Mather theory}) can be applied.  \\

In some cases, there is a way to push these methods below the strict critical value. The main idea consists in lifting the system to a {\it cover space} (see \cite{CIPP98}).

Given $\Pi: \widehat{M} \longrightarrow M$ a cover of $M$, we can consider the corresponding lifted Hamiltonian $\widehat{H}:= H\circ d\Pi$  and the associated Lagrangian $\widehat{L}$.
Following the variational definition $(2)$ from the compact case, one can define the { Ma\~n\'e  critical value} associated to this cover, as
$$ c(H,\widehat{M}) : =  c(\widehat{H})= \inf  \{k\in\R:\; A_{\widehat{L}+k}(\g)\geq 0, \quad \forall\;\mbox{abs. cont. loop}\; \g\}$$
(observe that  {\it a-priori} it is not clear whether all previous characterizations that are valid in the compact case, do still hold for non-compact cover spaces).

It is not difficult  to verify that $c(H,\widehat{M}) \leq c({H})$. Moreover,  equality holds if $\Pi: \widehat{M} \longrightarrow M$ is a finite cover (see \cite[Lemma 2.2]{CP02}).\\

Two distinguished covers are the {\it universal cover} $\Pi_u: \widetilde{M} \longrightarrow M$ and the {\it abelian cover} $\Pi_a: \overline{M} \longrightarrow M$ (\ie the cover of $M$ whose  group of deck transformations is $H_1(M;\Z)$). We shall denote the respective critical values by 
\beqa{criticalvalues}
c_u(H):=c(H,\widetilde{M}) \qquad {\rm and} \qquad c_a(H):=c(H,\overline{M}).
\eeqa
Clearly, $c_u(H)\leq c_a(H) \leq c_0(H)$.  Moreover, in \cite{PP97}  it was proved that in the case of compact manifolds,  $c_a(H)=c_0(H)$. Therefore, we can conclude that  also in the case of the abelian cover, $c_a(H)$ is also a symplectic invariant.\\

\begin{Rem}
Recall that a discrete group $G$ is said to be {\it amenable}, if there is a left (or right) invariant mean on $\ell^{\infty}(G)$, the space of all bounded functions on $G$.
For example, all finite groups or abelian groups are amenable; similarly for finite extensions of solvable groups. On the other hand, if a group contains a free subgroup of two generators then it is not amenable; this is the case of the fundamental group of a compact surface of genus $g \geq  2$.  In \cite{PP97}, the authors provided an example of a Tonelli Hamiltonian $H$ on a compact surface of genus two, for which $c_u(H)<c_0(H)$. See also Section \ref{examples}.
\end{Rem}

\subsection{Main results}

Recall that a hypersurface $\Sigma$ in a symplectic manifold $(V^{2n},\omega)$ is of {\it contact type} if $\omega\big|_{\Sigma} = d\lambda$ for a contact form $\lambda$ on $\Sigma$, {\it i.e.}, a $1$-form such that $\lambda\wedge (d\lambda)^{n-1}$ is nowhere vanishing. \\

A related notion is the notion of {\it virtual contact structure}. A hypersurface $\Sigma$ is said to be of virtual contact type if 
$\pi^*\omega\big|_{\widetilde{\Sigma}} = d\lambda$, for a contact form $\lambda$ on the universal cover $\pi: \widetilde{\Sigma}\longrightarrow \Sigma$ such that
$$
\sup_{x\in \widetilde{\Sigma}} |\lambda_x| \leq C < +\infty \quad {\rm and}\quad
\inf_{x\in \widetilde{\Sigma}} \lambda( R) \geq \epsilon >0,
$$
where $|\cdot|$ is a metric on $\Sigma$ and $R$ is a vector field generating ${\rm Ker}(\omega|\Sigma)$  (both pulled back to $\widetilde{\Sigma}$).
If $(\Sigma,\omega)$ is virtually contact and its fundamental group $\pi_1(\Sigma)$ is amenable, then $(\Sigma,\omega)$ is of contact type 
(this follows from a standard argument using amenability as in \cite{FathiMaderna,P06}).

For energy levels $\Sigma_k:=\{H(x,p)=k\}$ of Tonelli Hamiltonians we have the following (assume $M$ compact and, for simplicity, orientable):

\begin{enumerate}
\item for $k>c_0(H)$, the energy level $\Sigma_k$ is of contact type \cite[Theorem B.1]{Contreras};
\item for $M$ different from the 2-torus and $c_u(H)<k\leq c_0(H)$, the energy level $\Sigma_{k}$ is never of contact type \cite[Theorem B.1]{Contreras};
\item for $k>c_u(H)$ the energy level $\Sigma_k$ is virtually contact \cite{CFP} (this was proved for magnetic Lagrangians with potentials, but it is easy to see that it holds for any Tonelli Hamiltonian);
\item in \cite[Section 5]{CMP} the authors provide an example of a Tonelli Lagrangian on $T\T^2$ for which
$c_u(L)=c_a(L)$ and the corresponding energy level is of contact type.
\end{enumerate}
Observe, in particular, that if $\Sigma_k$ is of contact (or virtual contact) type, then $k$ is a regular value of the energy function
$E(x,v)=\frac{\partial L}{\partial v} (x,v)\cdot v - L(x,v)$.
In particular, $k> e:= \max_{x \in M} E(x,0) =- \min_{x \in M} L(x,0)$, which corresponds by superlinearity to
$$
e = \min\left\{c\in\R:\; \pi: E^{-1}( c) \subset TM \longrightarrow M \; \mbox{is surjective}\right\}.\\
$$ 

\vspace{10 pt}

Continuing this analysis, we prove the following  result (see section \ref{sec2}).\\

\noindent {\bf Theorem A.} {\it If $\dim M\geq 3$ and $H: \rT^*M\longrightarrow \R$ is Tonelli, then the energy level $\S_{c_u(H)}:=\{E(x,v)= c_u(H)\}$ is never of virtual contact type,
where $E(x,v)=\frac{\partial L}{\partial v} (x,v)\cdot v - L(x,v)$ denotes the energy function}.

\bigskip

Observe that the hypothesis $\dim M \geq 3$ is necessary because of the example in item (4) above. However it makes sense to ask:

\medskip

\noindent{\bf Question I}: Is it possible to find examples of Tonelli Hamiltonians on surfaces of higher genus, for which  $\S_{c_u(H)}$ is of virtual contact type?\\

In Sections \ref{sec3}, \ref{sec4} and \ref{sec5} we investigate the symplectic properties of the  action-minimizing objects associated to the lift of the Hamiltonian to the universal cover.  Our main result in this direction is a proof of the symplectic invariance of $c_u(H)$, the finiteness of the Peierls barrier $h_{\widetilde{H}}$ and the Aubry set $\cA^*_{\widetilde{H}}$ (we review the definition of these objects in Section \ref{sec3}).

\bigskip

\noindent {\bf Theorem B.} {\it Let $M$ be a closed manifold and $H: \rT^*M\longrightarrow \R$ a Tonelli Hamiltonian.
Assume $\Psi:T^*M\to T^*M$ is a symplectic diffeomorphism such that $H':=H\circ\Psi$ is still of Tonelli type.
Then
\begin{enumerate}
\item $c_{u}(H)=c_{u}(H')$;
\item The Peierls barrier $h_{\widetilde{H}}$ is finite if and only if $h_{\widetilde{H'}}$ is finite;
\item The projected Aubry set $\cA_{\widetilde{H}}$ is empty if and only if  the projected Aubry set $\cA_{\widetilde{H'}}$ is empty;
\item $\cA^*_{\widetilde{H}'}=\widetilde{\Psi}^{-1}(\cA^*_{\widetilde{H}})$, where $\widetilde{\Psi}$ is any lift of $\Psi$ to $T^*\widetilde{M}$.
\end{enumerate}

}

\medskip

The main difficulty in this setting is represented by the lack of compactness, which might create quite peculiar aftermaths, like the Peierls barrier being infinite or  the Aubry set (or Peierls set) being empty. Examples of these occurrences can be found in \cite[Section 6]{C01}, however
these examples are unfortunately {\it not} lifts of Lagrangians on closed manifolds. We remedy this here by providing in Section \ref{examples} 
two examples. One has $c_u=c_0$, finite Peierls barrier but empty Aubry set in the universal cover and the other has
$c_u<c_0$, and also finite Peierls barrier and empty Aubry set. In the latter we find all the minimizing ergodic invariant measures with zero homotopy thus illustrating what we are up against.

\medskip

\noindent{\bf Question II}: Is it possible to find examples of Tonelli Hamiltonians on closed manifolds for which $h_{\widetilde{H}}$ is infinite?\\

The proof of the first three items in Theorem B is not difficult (see Proposition \ref{thm1}, Theorems \ref{thm2} and  \ref{thm3}). The proof of item (4) is more involved and for this we need to
adapt Bernard's methods in \cite{B07} to this non-compact setting (see Theorem \ref{invarianceAubry}).




\section{Virtual contact property}\label{sec2}

In this section we want to prove Theorem A. Let us start by recalling some definitions and properties.
Define the space of continuous functions with at most linear growth 
$$
C^0_{\ell}(\rT M):=\left\{f \in C^0(\rT M;\R):\; 
\sup_{(x,v)\in \rT M} \frac{|f(x,v)|}{1+\|v\|}<\infty \right\}
$$
and consider the set
$$
\calM_{\ell} := \left\{\m\; \mbox{Borel probability measures on}\; \rT M \; \mbox{s.t.}\; \int_{\rT M}\|v\|\,d\m<\infty    \right\}
$$
endowed with the topology: $\lim_n \m_n =\m$ if and only if $\lim_{n} \int f\,d\m_n = \int f\,d\m$ for any $f\in C^0_{\ell}(\rT M)$.
Observe that $\calM_{\ell}$ can be naturally embedded into the dual space $\left(C^0_{\ell}(\rT M) \right)^*$ and its topology coincides with the weak$^*$-topology on  $\left(C^0_{\ell}(\rT M) \right)^*$. It is also possible to prove that this topology is metrizable.

For any $\g:[0,T]\longrightarrow M$ absolutely continuous curve, let us associate a Borel probability measure $\m_{\g}$ uniformly distributed on $\g$, {\it i.e.},
$$
\int_{\rT M} f\,d\m_{\g} = \frac{1}{T} \int_0^T f(\g(t),\dot{\g}(t))\,dt \qquad \forall\;f\in C^0_{\ell}(\rT M).
$$
Since $\int_0^T \|\dot{\g}(t)\|\,dt < \infty$, then $\m_{\g}\in \calM_{\ell}$. We denote by 
$\calC(M)$ the set of $\m_{\g}$ generated by closed absolutely continuous loops $\g$ and consider its closure $\overline{\calC(M)}$ in $\calM_{\ell}$. A measure in $\overline{\calC(M)}$ is called a {\em holonomic measure}. It is easy to check that this set is convex and that it contains all invariant probability measures for any Tonelli Lagrangian $L$ (it is essentially Birkhoff's theorem). See for instance \cite{Mane91,CI99}.\\

Amongst holonomic measures, we want to look at the special ones generated by contractible loops. More specifically, let 
$$
\calC_0(M) := \left\{\m_{\g}\in \calC(M):\; \g\;\mbox{is a contractible abs. cont. loop}  \right\}
$$
and let us consider its closure
$\cH_0(M):= \overline{\calC_0(M)} \subset \overline{\calC(M)} \subset \calM_{\ell}$. We call these measures ``{\em holonomic measures with zero homotopy type}''.\\

\begin{Rem}
Recalling the definition of $c_u(H)$ and observing that for any given $\m\in \cH_0(M)$ there exists a sequence $\m_n\in \calC_0(M)$ such that   $\m_n\rightarrow \m$ and 
$\int L\, d\m_n \longrightarrow \int L\, d\m$, then:
$$
c_u(H):= -\inf_{\m\in \cH_0(M)} A_L(\m)\,.
$$
\end{Rem}
\vspace{10 pt}

In \cite{Carneiro} Dias Carneiro proved that Mather's minimizing measures for $L$ have support
contained in the energy level $c(H)$. Our first proposition establishes a weaker result
for the action--minizimizing measure with zero homotopy type.\\

\begin{Prop}\label{energymeasure}
If $\m \in \cH_0(M)$ is such that $A_L(\m)=-c_u(H)$, then
$$\int E(x,v)\,d\m= c_u(H),$$ where $E(x,v)=\frac{\partial L}{\partial v} (x,v)\cdot v - L(x,v)$ is the energy.\\
\end{Prop}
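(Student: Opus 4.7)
Plan for the proof of Proposition \ref{energymeasure}. The approach is a one-parameter variational argument obtained by velocity-rescaling the minimizer, combined with convexity of $L$ in $v$.

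For each $s>0$ I consider the pushforward map $S_s:\calM_\ell\to\calM_\ell$ induced by $(x,v)\mapsto (x,sv)$. At the level of curves this is just time-reparametrization: if $\g:[0,T]\to M$ is an abs. cont. loop and $\g^{(s)}(t):=\g(t/s)$ on $[0,sT]$, then $\g^{(s)}$ is again a contractible abs. cont. loop and $S_{1/s}\m_{\g}=\m_{\g^{(s)}}$. Hence $S_s(\calC_0(M))\subseteq\calC_0(M)$, and by continuity of $S_s$ on $\calM_\ell$ combined with $\cH_0(M)=\overline{\calC_0(M)}$, we get $S_s(\cH_0(M))\subseteq\cH_0(M)$ for every $s>0$.

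Applying this to the minimizer $\m$, for every $s>0$ the measure $S_s\m$ lies in $\cH_0(M)$, so by the variational characterization of $c_u(H)$ recalled just above,
$$g(s):=A_L(S_s\m)=\int_{\rT M}L(x,sv)\,d\m(x,v)\,\geq\, -c_u(H)\,=\,g(1),$$
allowing the value $+\infty$ where needed. The function $g$ is convex on $(0,\infty)$, as an average of the convex functions $s\mapsto L(x,sv)$, and attains its global minimum at $s=1$. Next, I extract the first-order condition. For each $(x,v)$, convexity of $L(x,\cdot)$ makes the secant slope $s\mapsto (L(x,sv)-L(x,v))/(s-1)$ monotone non-decreasing in $s$ and convergent to $\partial_v L(x,v)\cdot v$ as $s\to 1$. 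Monotone convergence (straightforwardly on the left, and on the right provided $g$ is finite on a neighborhood of $1$) then yields
$$g'_{\pm}(1)=\int_{\rT M}\partial_v L(x,v)\cdot v\,d\m.$$
Since $s=1$ is a minimum of $g$, we must have $g'_-(1)\leq 0\leq g'_+(1)$, so both one-sided derivatives vanish and $\int \partial_v L\cdot v\,d\m=0$. Combining this with $E=\partial_v L\cdot v-L$ and $\int L\,d\m=-c_u(H)$ gives $\int_{\rT M}E\,d\m=c_u(H)$, as desired.

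The main technical subtlety, which I expect to be the main obstacle, is the finiteness of $g$ on a full neighborhood of $s=1$ (needed so that $g'_+(1)$ equals the above integral rather than being $+\infty$ vacuously). On the left $s\in(0,1]$ this is immediate from $L(x,sv)\leq sL(x,v)+(1-s)L(x,0)$ and compactness of $M$. For $s$ slightly above $1$ one must exploit that $\m$ arises as the limit of uniform measures on closed contractible loops $\g_n$ with $A_L(\m_{\g_n})\to -c_u(H)$: propagating enough integrability along this approximation, or equivalently running the rescaling argument at the level of the loops $\g_n$ and passing to the limit afterwards, is where the real work of the proof lies.
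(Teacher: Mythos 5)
Your proposal follows essentially the same route as the paper's proof: rescale the velocities via $(x,v)\mapsto(x,sv)$, verify that the rescaled measure remains in $\cH_0(M)$ by reparametrizing the approximating contractible loops, and extract the first-order condition at $s=1$, which is precisely $\int \partial_v L\cdot v\,d\m=0$. Where you differ is only in the level of care: the paper simply sets $F(\l)=\int L(x,\l v)\,d\m$, asserts $F'(1)=0$, and differentiates under the integral sign with no discussion of integrability, whereas you justify the derivative via convexity and monotone convergence. Your left-sided half is complete as written: $L(x,sv)\le sL(x,v)+(1-s)L(x,0)$ for $s\in(0,1]$ and compactness of $M$ give finiteness of $g$ on $(0,1]$, monotone convergence identifies $g'_-(1)$ with $\int\partial_vL\cdot v\,d\m$, and minimality then yields $\int E\,d\m\le c_u(H)$ unconditionally. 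The point you flag as ``the real work'' --- finiteness of $g$ just above $s=1$, without which $g'_+(1)=+\infty$ and the inequality $g'_+(1)\ge 0$ is vacuous --- is a genuine issue for an arbitrary $\m\in\calM_{\ell}$ with $\int L\,d\m<\infty$ (superlinearity of $L$ does not prevent $\int L(x,2v)\,d\m=+\infty$), and it is not addressed in the paper either; you have not introduced a new gap, you have merely made the paper's implicit one visible. Note that in the one place the Proposition is actually invoked (the proof of Theorem A), the measure is a weak$^*$ limit of Tonelli minimizers with uniformly bounded velocities and hence has compact support, so there all integrals are over a compact set, $g$ is finite and differentiable everywhere, and your argument closes without further work.
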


\begin{Proof}
Let $\l\in \R$ and consider a new probability measure $\m_{\l}$ defined by
$$
\int f d\m_{\l} = \int f(x,\l v)d\m \qquad \forall \; f\in C^0_{\ell}(\rT M)\,.
$$
Clearly $\m_{\l}\in \cH_0(M)$. In fact,  since $\m\in \cH_0$, then there exist $\m_{\g_n}\rightarrow \m$ with $\g_n: [0,T_n]\rightarrow M$ contractible loops. Let us define $\m_{\g_n}^{\l}$ by
$$
\int f d\m_{\g_n}^{\l} =\frac{1}{T_n} \int_0^{T_n} f(\g_n(t),\l \dot{\g}_n(t)) dt = \frac{\l}{T_n} \int_0^{\frac{T_n}{\l}} f(x_n(t),\dot{x}_n(t)) dt \qquad
\forall\; f\in C^0_{\ell}(\rT M),
$$
where $x_n(t)=\g_n(\l t)$. Clearly $x_n$ are contractible and moreover $\m_{x_n}=\m^{\l}_{\g_n} \rightarrow \m_{\l}$ since
$$
\frac{1}{T_n}\int_0^{T_n} f(\g_n(t), \l \dot{\g}_n(t))\, dt \longrightarrow  
\int f(x,\l v)d\m = \int f d\m_{\l}\,.
$$

Now, set $F(\l) = \int L\,d\m_{\l} = \int L(x,\l v)\, d\m$ and observe that $F'(1)=0$ since $\m$ is action-minimizing in $\cH_0$. Moreover,
\beqano
0=F'(1) &=& \int \frac{\partial L}{\partial v}(x,v)\cdot v \,d\m = \\
&=& \int \left( E(x, v) + L(x,v) \right)\,d\m = \\
&=& \int E(x, v)\,d\m - c_u(H)\,.
\eeqano
\end{Proof}

This naturally raises:

\vspace{10 pt}

\noindent 
{\bf Question III}:  Is it true that action minimizing measures with zero homotopy are supported on the energy level
$\S_{c_u(H)}$? In other words: what is the energy of their ergodic components?\\

\vspace{10 pt}

An easy argument with the Tonelli theorem, to be supplied below during the proof of Theorem A, shows that there always exist minimizing invariant measures with zero homotopy which are supported on the energy level $\Sigma_{c_{u}(H)}$.

\begin{Proof}[Theorem A]
Let us assume by contradiction that $\S_{c_u(H)}$ is of virtual contact type. Then, using Legendre duality:
\begin{eqnarray*}
L(x,v)+c_u(H)\Big|_{\S_{c_{u}(H)}} &= &
L(x,v)+ E(x,v)\Big|_{\S_{c_{u}(H)}} \\
&=&  \frac{\partial L}{ \partial v}(x,v) \cdot v \\
&=&\Theta(X_E(x,v))\Big|_{\S_{c_{u}(H)}}
\end{eqnarray*}
where $X_E$ is the Euler-Lagrange vector field and
$\Theta$ is the pull-back of the canonical 1-form form $\lambda$ of $T^*M$ via the Legendre transform.

 Let $\widetilde{\S}_{c_u(H)} \stackrel{\pi}{\longrightarrow} {\S}_{c_u(H)}$ be the universal cover. Since $\dim M\geq 3$, this is the same as lifting everything to $\widetilde{M}$ and consider the energy level $c_{u}(H)$ of the lifted system (in fact, $E^{-1}(k) \longrightarrow \widetilde{M}$ is a sphere fibration over a simply connected manifold with simply connected fibers).

  We are assuming that there exists $\a$ on $\widetilde{\S}_{c_u(H)}$ such that
$d\widetilde{\Theta} = d\a$, with $\|\a\|_{C^0}<\infty$ and $\a(\widetilde{X}_E)\geq \e$. Moreover, since
$\pi_1(\widetilde{\S}_{c_u(H)})=0$, then there exists a smooth function $f:\widetilde{\S}_{c_u(H)}\longrightarrow \R$ such that
$$\widetilde{\Theta}= \a - df \qquad \mbox{on}\quad \widetilde{\S}_{c_u(H)}.$$
Extend $f$ to a smooth function on $\rT \widetilde{M}$. Then, $\widetilde{\Theta}+df$ is defined on all $\rT \widetilde{M}$ and has the property that $(\widetilde{\Theta}+df)(\widetilde{X}_E)\geq \e$ on 
$\widetilde{\S}_{c_u(H)}$.

\begin{Lem}\label{Lemmaintheproposition}
There exists $\d>0$ such that $\widetilde{\Theta}(\widetilde{X}_E) + df (\widetilde{X}_E) \geq \frac{\e}{2}$ on 
$E^{-1}\big( c_u(H)-\d,c_u(H)+\d\big)$.
\end{Lem}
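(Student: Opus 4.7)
The plan is to make a specific choice of extension of $f$ using a deck-equivariant tubular neighborhood of $\widetilde{\Sigma}_{c_u(H)}$, and then to prove the estimate by a first-order Taylor expansion in the transversal direction whose constants are controlled, uniformly on the non-compact $\rT\widetilde{M}$, by the virtual contact bound on $\alpha$ together with the deck-invariance of $\widetilde{\Theta}$ and $\widetilde{X}_E$.

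First, I would construct the adapted tubular neighborhood. Since $c_u(H) > e$, $\widetilde{\Sigma}_{c_u(H)} = (d\Pi_u)^{-1}(\Sigma_{c_u(H)})$ is a smooth regular hypersurface whose quotient by the deck group $G$ of $\Pi_u$ is the compact hypersurface $\Sigma_{c_u(H)} \subset \rT M$. Fix a Riemannian metric on $\rT M$, lift it to a $G$-invariant metric on $\rT\widetilde{M}$, and set $N := \nabla E / \|\nabla E\|^2$; this is smooth, bounded, and $G$-invariant on an open neighborhood of $\widetilde{\Sigma}_{c_u(H)}$, since $\|\nabla E\|$ is uniformly bounded below there by compactness of the quotient. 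The time-$y$ flow of $N$ is defined for $|y|<\delta_0$ uniformly in the starting point, producing a $G$-equivariant diffeomorphism
\[
\Phi:\ \widetilde{\Sigma}_{c_u(H)} \times (-\delta_0, \delta_0) \longrightarrow V
\]
onto a tubular neighborhood $V$ of $\widetilde{\Sigma}_{c_u(H)}$, satisfying $E(\Phi(z,y)) = c_u(H) + y$.

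Next, I would extend $f$ by declaring it constant along normals, $f(\Phi(z,y)) := f(z)$ on $V$, extended smoothly in any manner elsewhere. Setting $g := \widetilde{\Theta}(\widetilde{X}_E) + df(\widetilde{X}_E)$, energy conservation forces $\widetilde{X}_E$ to be tangent to every $\widetilde{\Sigma}_{c_u(H)+y}$, so the vector $Y(z,y) := d\Phi^{-1}|_{\Phi(z,y)}(\widetilde{X}_E(\Phi(z,y)))$ has no $\partial_y$ component, lies in $T_z\widetilde{\Sigma}_{c_u(H)}$, and satisfies $Y(z,0) = \widetilde{X}_E(z)$. Hence $df(\widetilde{X}_E)(\Phi(z,y)) = df|_z(Y(z,y))$, and using the relation $df|_z = \alpha_z - \widetilde{\Theta}|_z$ on $T_z\widetilde{\Sigma}_{c_u(H)}$ (so that $g(z) = \alpha_z(\widetilde{X}_E(z)) \geq \varepsilon$), I would rewrite
\[
g(\Phi(z,y)) - \alpha_z(\widetilde{X}_E(z)) = \bigl[\widetilde{\Theta}(\widetilde{X}_E)(\Phi(z,y)) - \widetilde{\Theta}(\widetilde{X}_E)(z)\bigr] + df|_z\bigl(Y(z,y) - \widetilde{X}_E(z)\bigr).
\]

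The final step is the uniform estimate. The function $\widetilde{\Theta}(\widetilde{X}_E)$ is $G$-invariant and smooth, so by compactness of $\Sigma_{c_u(H)}$ its $y$-derivative along $V$ is uniformly bounded, giving $|\widetilde{\Theta}(\widetilde{X}_E)(\Phi(z,y)) - \widetilde{\Theta}(\widetilde{X}_E)(z)| \leq C_1|y|$ uniformly in $z$. By $G$-equivariance of $\Phi$ and deck-invariance of $\widetilde{X}_E$, the map $Y$ descends to $\Sigma_{c_u(H)} \times (-\delta_0, \delta_0)$, so $\|Y(z,y) - \widetilde{X}_E(z)\| \leq C_2|y|$ uniformly. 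And $\|df|_z\| \leq \|\alpha_z\| + \|\widetilde{\Theta}|_z\|$ is uniformly bounded, using the virtual contact hypothesis $\|\alpha\|_{C^0} \leq C$ and the deck-invariance of $\widetilde{\Theta}$. Combining, $|g(\Phi(z,y)) - \alpha_z(\widetilde{X}_E(z))| \leq C_\ast |y|$ for some uniform $C_\ast > 0$, so choosing $\delta := \min(\delta_0, \varepsilon/(2C_\ast))$ yields $g \geq \varepsilon/2$ on all of $E^{-1}(c_u(H)-\delta, c_u(H)+\delta) = \Phi(\widetilde{\Sigma}_{c_u(H)} \times (-\delta, \delta))$. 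The main obstacle throughout is precisely the non-compactness of $\rT\widetilde{M}$: a naively chosen extension of $f$ could have transversal derivatives growing without control at infinity, and no uniform $\delta$ would be available; tying the extension to the deck-equivariant $\Phi$ is what pulls the bounds down to the compact quotient and meshes with the virtual-contact uniformity on $\alpha$.
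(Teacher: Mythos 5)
Your proposal is correct and follows essentially the same route as the paper: both extend $f$ by composing with a projection onto $\widetilde{\S}_{c_u(H)}$ (your normal flow $\Phi$ is a specific choice of the paper's $\tilde{\pi}$), and both derive the uniform estimate from the bound $\|df\|_{C^0}\leq \|\alpha\|_{C^0}+\|\widetilde{\Theta}\|_{C^0}$ supplied by the virtual contact hypothesis together with deck-invariance of $\widetilde{\Theta}$ and $\widetilde{X}_E$ over the compact quotient. Your write-up merely makes explicit the equivariance and first-order Taylor bounds that the paper leaves implicit.
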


We shall prove this lemma after completing the proof of Theorem A.

Let us now consider a sequence $\g_n: [0,T_n]\longrightarrow M$ of closed contractible Tonelli minimizers on $M$ (\ie each of them minimizes the action among contractible loops with the same time length) such that
$$
0 \leq \frac{1}{T_n}A_{L+c_u(H)}(\g_n) \longrightarrow 0.
$$
Each $\g_n$ has energy $k_n$. By a-priori compactness estimates \cite[Lemma 3.2.1]{CI99}, these $k_n$ are bounded. In fact, by superlinearity of $L$ we know that there exists $D>0$ such that  $L(x,v) \geq \|v\|-D$ for each $(x,v)\in \rT M$. Therefore,
$$
0 \longleftarrow A_{L+c_u(H)}(\g_n) \geq \frac{1}{T_n} \int_0^{T_n} \left(\|\dot{\g}_n\|-D+c_u(H) \right)\,.
$$
Applying the mean value theorem, we conclude that there are $t_0^n \in [0,T_n]$ such that
$\|\dot{\g}_n(t_0^n)\|\leq K$ uniformly. Hence, using the fact that the energy is constant along the orbits of the flow, we can conclude that the sequence $k_n$ is bounded.
By passing to a subsequence, if necessary, we can assume that $k_n\rightarrow k$ and
$$
\lim_{n\rightarrow +\infty} \frac{1}{T_n}A_L(\g_n)=-c_u(H).
$$
Let $\m \in \cH_0$ be a weak$^*$-limit of $\m_{\g_n}$. Clearly $\supp \m \subset E^{-1}(k)$ and $A_L(\m)=-c_u(H)$. By Proposition \ref{energymeasure} if follows that $k=c_u(H)$, so $k_n\rightarrow c_u(H)$.

\begin{Rem} Observe that we have proved the existence of invariant minimizing measures with
zero homotopy contained in the energy level $c_{u}(H)$.
\end{Rem}

For $n$ sufficiently large, $\G_n(t):= (\g_n(t),\dot{\g}_n(t)) \in E^{-1}(c_u(H)-\d, c_u(H)+\d)$. Thus, 
\beqano
\frac{1}{T_n} A_{L+c_u(H)}(\g_n) &=& \frac{1}{T_n} A_{L+k_n}(\g_n) + (c_u(H)-k_n) \\
&=& \frac{1}{T_n} \int_{\G_n}\Theta(X_E) + (c_u(H)-k_n).
\eeqano
Using Lemma \ref{Lemmaintheproposition}, we obtain
$$
\frac{1}{T_n} A_{L+c_u(H)}(\g_n) \geq \frac{\e}{2} + (c_u(H) - k_n).
$$
Taking the limit as $n$ goes to $+\infty$, we obtain a contradiction:
$$
0= \lim_{n\rightarrow +\infty} \frac{1}{T_n} A_{L+c_u(H)}(\g_n) \geq \lim_{n\rightarrow +\infty}\left(\frac{\e}{2} + (c_u(H) - k_n)\right) = \frac{\e}{2}.
$$
\end{Proof}

Let us now prove Lemma \ref{Lemmaintheproposition}.\\

\noindent {\bf Proof [Lemma \ref{Lemmaintheproposition}].} 
We shall choose $\d$ later. For the moment, let us consider a neighborhood of $\S_{c_u(H)}$ so that the following map is well defined
$$
\pi:  E^{-1}\big(c_u(H)-\d, c_u(H)+\d\big) \longrightarrow \S_{c_u(H)} \qquad {\rm s.t.}\quad \pi|E^{-1}(c_u(H)) = {\rm Id}.
$$
Lift this to $\rT \widetilde{M}$ and denote it by $\tilde{\pi}$. Let us extend 
$f: \widetilde{\S}_{c_u(H)}\longrightarrow \R$ by considering $f\circ \tilde{\pi}$. We want to show that we can choose $\d>0$ such that:
$$
\widetilde{\Theta}(\widetilde{X}_E) + df(d\tilde{\pi}(\widetilde{X}_E)) \geq \frac{\e}{2} \qquad{\rm on}\;
E^{-1}(c_u(H)-\d,c_u(H)+\d ).
$$
For this, just note that there exists $\d>0$ such that:
\beqano
&& \left| \widetilde{\Theta}(\widetilde{X}_E)(x,v) - \widetilde{\Theta}(\widetilde{X}_E)(\tilde{\pi}(x,v)) \right| \leq \frac{\e}{10}\\
&& \left| df_{\tilde{\pi}(x,v)} (d\tilde{\pi}(\widetilde{X}_E))(x,v)  -
df_{\tilde{\pi}(x,v)} (\widetilde{X}_E)(\pi(x,v))
 \right| \leq \frac{\e}{10}
\eeqano
because $|df|_{C^0} \leq K$, for some $K>0$, from the definition of being virtually contact.

Thus for $(x,v)\in \widetilde{E}^{-1}\big(c_u(H)-\d,c_u(H)+\d\big)$ we have:
\beqano
 \left(\widetilde{\Theta}(\widetilde{X}_E)+ df(d\tilde{\pi}(\widetilde{X}_E) \right)(x,v) &=&
\widetilde{\Theta}(\widetilde{X}_E)(x,v) - \widetilde{\Theta}(\widetilde{X}_E)(\tilde{\pi}(x,v)) \\
&& \ +
df_{\tilde{\pi}(x,v)} (d\tilde{\pi}(\widetilde{X}_E))(x,v) \\
&& -\;  df_{\tilde{\pi}(x,v)} (\widetilde{X}_E)(\pi(x,v)) \\
&& +\;\widetilde{\Theta}(\widetilde{X}_E) + df (\widetilde{X}_E)(\pi(x,v))  \\
&\geq& -\frac{\e}{10} -\frac{\e}{10} + \e = \frac{8\e}{10} > \frac{\e}{2}\,.
 \eeqano
\qed

\section{Symplectic invariance of $c_u(H)$} \label{sec3}

In this section we wish to prove the first item in Theorem B.

\begin{Prop}\label{thm1}
If $\Psi: \rT^*M \longrightarrow \rT^*M$ is a symplectomorphism such that $H\circ \Psi^{-1}$ is still of Tonelli type, then $c_u(H) = c_u(H\circ \Psi^{-1})$.
\end{Prop}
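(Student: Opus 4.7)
The plan is to work directly on $M$ using the measure-theoretic characterization
$$c_u(H) = -\inf_{\m \in \cH_0(M)} A_L(\m)$$
recalled in the excerpt. By standard Tonelli coercivity and lower semicontinuity of the action (together with the fact that an action-minimizer in $\cH_0(M)$ is automatically $X_L$-invariant), this infimum is attained on an $X_L$-invariant Borel probability measure on $\rT M$ with vanishing rotation vector. It thus suffices to produce an action-preserving bijection between such measures for $L$ and those for $L'$, the Tonelli Lagrangian dual to $H':=H\circ\Psi^{-1}$.

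The natural candidate is the conjugation
$$\Phi := \mathbb{L}_{H'}^{-1}\circ\Psi\circ\mathbb{L}_H : \rT M \longrightarrow \rT M$$
built from the Legendre transforms $\mathbb{L}_H,\mathbb{L}_{H'}:\rT M\to \rT^*M$. Since $\Psi$ conjugates $X_H$ with $X_{H'}$, $\Phi$ conjugates $X_L$ with $X_{L'}$, so $\m':=\Phi_*\m$ is $X_{L'}$-invariant whenever $\m$ is $X_L$-invariant. To show $\Phi_*$ preserves the zero-homotopy class, exploit the isomorphism $\pi^*:H^1(M;\R)\to H^1(\rT^*M;\R)$: for any closed $1$-form $\et$ on $M$ there exist a closed $1$-form $\et'$ on $M$ and $G\in C^\infty(\rT^*M)$ with $\Psi^*\pi^*\et = \pi^*\et' + dG$. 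Setting $\n:=(\mathbb{L}_H)_*\m$ and $\n':=\Psi_*\n=(\mathbb{L}_{H'})_*\m'$,
$$\langle [\et], \r(\m')\rangle = \int_{\rT^*M}(\Psi^*\pi^*\et)(X_H)\,d\n = \langle [\et'], \r(\m)\rangle,$$
the $dG$-term vanishing by invariance of $\n$. Hence $\r(\m)=0$ if and only if $\r(\m')=0$.

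The action comparison proceeds along the same lines. Using $L\circ\mathbb{L}_H^{-1}=\lambda(X_H)-H$ on $\rT^*M$ (Legendre duality), together with $H'\circ\Psi = H$ and $\lambda(X_{H'})\circ\Psi = (\Psi^*\lambda)(X_H)$ (the latter because $X_{H'}\circ\Psi=d\Psi\cdot X_H$), a direct computation gives
$$A_{L'}(\m')-A_L(\m) = \int_{\rT^*M}(\Psi^*\lambda - \lambda)(X_H)\,d\n.$$
The closed $1$-form $\Psi^*\lambda - \lambda$ admits the same decomposition $\pi^*\et + dG$, which reduces the right-hand side to $\langle[\et],\r(\m)\rangle = 0$ whenever $\m\in\cH_0(M)$. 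Running the same argument for $\Psi^{-1}$ shows that $\Phi$ is a bijection whose inverse has the same two properties, and taking infima over $\cH_0(M)$ on both sides yields $c_u(H) = c_u(H')$.

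The main delicate point, already emphasized in Bernard's \cite{B07} treatment, is that $\Psi$ need not be exact: it may act non-trivially on $H^1(\rT^*M;\R)\cong H^1(M;\R)$. The decomposition $\Psi^*\lambda - \lambda = \pi^*\et + dG$ absorbs this non-exactness cleanly, and the zero-rotation hypothesis on $\m$ is precisely what kills the surviving cohomological correction; the remaining work is routine bookkeeping.
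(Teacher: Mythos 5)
There is a genuine gap, and it is the crux of the whole statement: you conflate \emph{zero rotation vector} (an $H_1(M;\R)$ condition) with \emph{zero homotopy type}, i.e.\ membership in $\cH_0(M)=\overline{\calC_0(M)}$ (a $\pi_1(M)$ condition). Your argument pushes a minimizer $\mu\in\cH_0(M)$ forward to $\mu'=\Phi_*\mu$ and correctly shows that $\mu'$ is invariant, that $A_{L'}(\mu')=A_L(\mu)$, and that $\rho(\mu')=0$. But to conclude $c_u(H')\geq c_u(H)$ you must exhibit $\mu'$ as an element of $\cH_0(M)$, i.e.\ as a limit of measures supported on \emph{contractible} loops, and having zero rotation vector does not give this. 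The infimum of $A_L$ over invariant measures with $\rho=0$ equals $-c_0(H)$ (the strict critical value), not $-c_u(H)$, and these genuinely differ when $\pi_1(M)$ is not amenable: in the $PSL(2,\R)$ example of Section 6 one has $c_u=1/4<c_0=1/2$, so there are invariant measures with zero rotation vector whose action lies strictly below $\inf_{\cH_0(M)}A_L$, and such measures are not in $\cH_0(M)$. As written, your scheme therefore only yields $c_0(H')\geq c_u(H)$ and its mirror image, which is the (known) symplectic invariance of $c_0$, not of $c_u$. A secondary soft spot is the parenthetical claim that a minimizer over $\cH_0(M)$ is automatically invariant: Ma\~n\'e's theorem applies to minimizers over all holonomic measures, and restricting to the closed convex subset $\cH_0(M)$ requires a separate argument (the paper obtains invariant minimizers in $\cH_0$ as weak$^*$ limits of measures on closed contractible Tonelli orbits).

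The repair is exactly to descend from measures to contractible \emph{loops}, which is what the paper does: for $k<c_u(H)$ take a contractible loop with $A_{L+k}<0$, upgrade it via Tonelli's theorem to a contractible periodic \emph{orbit} $\gamma$, lift to the flow line $\Gamma\subset \rT^*M$, and observe that $\Psi(\Gamma)$ is again a closed, contractible flow line of $H'$ whose projected action differs from $A_{L+k}(\gamma)$ by $\int_\Gamma(\Psi^*\lambda-\lambda)=0$, the integrand being a closed $1$-form and $\Gamma$ a contractible loop. Contractibility is preserved by the diffeomorphism $\Psi$ at the level of $\pi_1$, which is precisely the $\pi_1$-level information your cohomological decomposition $\Psi^*\lambda-\lambda=\pi^*\et+dG$ cannot see. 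Your Legendre-duality computation of the action difference is correct and is essentially the same as the paper's; the missing ingredient is working with closed contractible orbits rather than with invariant measures of zero rotation vector.
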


\begin{Proof}
Let $k<c_u(H)$. It follows from the definition of $c_u(H)$ (see subsection \ref{sec1.2}) that there exists a closed absolutely continuous contractible curve $\s:[0,T] \rightarrow M$ such that $A_{L+k}(\s)<0$. A-priori, this curve might not be an orbit, but applying Tonelli's theorem \cite{Mane91, Mat91} we can find a closed contractible minimizer (and hence an orbit for the flow) $\g: [0,T] \rightarrow M$ such that
$$ A_{L+k}(\g) \leq A_{L+k}(\s)<0.$$
Let $\G:= \left(\g, \frac{\partial L}{\partial v} (\g,\dot \g)\right)$ denote the image of this orbit in $\rT^*M$. Clearly, $\G$ is a flow line associated to the Hamiltonian vector field of $H$.
Since $\Psi$ is a symplectomorphism, then $\Psi(\G)$ will be a flow line for $H\circ \Psi^{-1}$. If we denote by $\g':=\pi \big( \Psi(\G) \big)$, where $\pi: \rT^* M \rightarrow M$ is the canonical projection, then $(\g',\dot{\g}')$ is an orbit of the Euler-Lagrange flow of $L'$, where $L'$ is the Lagrangian associated to $H\circ \Psi^{-1}$, and 
$$
\Psi(\G) = \left( \g', \frac{\partial L'}{\partial v} (\g',\dot \g')\right).
$$

\noindent {\it Claim.} $A_{L+k}(\g) = A_{L'+k}(\g')$.\\

In fact, using Fenchel-Legendre duality and the fact that the Hamiltonian is constant along the flow lines, we obtain (recall that $\omega =d\lambda$):
\beqano
A_{L'+k}(\g') &=& \int_0^T \big[L'(\g',\dot{\g}') + k \big]\,dt  \\
&=& \int_{\Psi(\G)} \l  + \int_0^T\big[ k- H'(\Psi(\G)(t))   \big] \,dt  \\
&= & \int_{\G} \Psi^*\l  + \int_0^T \big[ k- H(\G(t))\big]\,dt  \\
&=& \int_{\G} (\Psi^*\l-\l)  + \int_{\G} \l + \int_0^T\big[ k - H(\G(t))\big]\,dt  \\
&=& \int_{\G} (\Psi^*\l-\l)  + \int_0^T \big[ L(\g,\dot{\g}) + k\big]\,dt  \\
&=& \int_{\G} (\Psi^*\l-\l) + A_{L+k}(\g)\,.
\eeqano
Observe now that $\Psi$ is a symplectomorphism, \ie $\Psi^*\omega = \omega$, therefore $\Psi^*\l-\l$ is a closed $1$-form. Hence, using that $\G$ is contractible, we can conclude that $\int_{\G} (\Psi^*\l-\l)=0$. 
This concludes the proof of the claim.\\

It follows now from the definition of $c_u$, that $c_u(H\circ\Psi^{-1}) \geq c_u(H)$. The reversed inequality is proved in the same way, using that $\Psi$ is invertible.\\
\end{Proof}

\section{Action-minimizing properties in the universal cover}\label{sec4}

In this section we would like to study the action-minimizing properties of the energy level corresponding to the Ma\~n\'e critical value of the universal cover. We refer the reader to \cite{CI99, Fathibook, SorLectNotes} for a more comprehensive introduction on Aubry--Mather--Ma\~n\'e--Fathi theory in the classical (compact) setting. The main difference (and difficulty) in this context is that the lift of the energy level to the universal cover is non-compact anymore (see also \cite{FathiMaderna}).

\subsection{Peierls barrier}
Let us denote by $\widetilde{H}$ the lift of the Hamiltonian to the universal cover, namely
$\widetilde{H}:= H\circ d\Pi_u$. Similary, $\widetilde{L}$ will represent the associated Lagrangian.
As done by Mather \cite{Mat93}, we want to study the action-minimizing properties of this system and define the so-called {\it Peierls barrier} associated to $\widetilde{H}$.\\
Given $\tx_1,\tx_2 \in \widetilde{M}$ and $T>0$, we define:
\beqa{finitetimepotential}
h^T_{\widetilde{H}}(\tx_1,\tx_2) : = \inf \int_0^T \widetilde{L}(\g(t),\dot{\g}(t))\,dt,
\eeqa
where the infimum is over all absolutely continuous curves $\g:[0,T]\rightarrow \widetilde{M}$ such that
$\g(0)=\tx_1$ and $\g(T)=\tx_2$. It follows from Tonelli theorem (it holds also in the non-compact case, assuming that $L$ is superlinear \cite{CI99}) that this infimum is actually a minimum.
The {\it Peierls barrier} is defined as follows:
\beqa{Peierlsbarrier}
h_{\widetilde{H}}(\tx_1,\tx_2) := \liminf_{T\rightarrow +\infty} \big[ h^T_{\widetilde{H}}(\tx_1,\tx_2) + c_u(H)T\big]\,.
\eeqa
One can check that $h_{\widetilde{H}}(\tx_1,\tx_2)>-\infty$ for any $\tx_1,\tx_2 \in \widetilde{M}$. 
Whilst in the compact case this quantity is always finite \cite{Mat93, Fathibook}, in the non-compact case it may be infinite.
However, it is easy to check that if this barrier is either finite everywhere, or identically equal to $+\infty$.

\begin{Lem}\label{lemmafinitepeierls}
If there exist $\tx_1,\tx_2 \in \widetilde{M}$ such that $h_{\widetilde{H}}(\tx_1,\tx_2)<+\infty$, then $h_{\widetilde{H}}$ is finite everywhere.
\end{Lem}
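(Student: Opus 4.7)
The plan is to establish a sub-additive (triangle-type) inequality for the finite-time potentials $h^T_{\widetilde{H}}$ and then exploit the $\liminf$ structure in the definition \eqref{Peierlsbarrier} of $h_{\widetilde{H}}$. Concretely, for any $a,b\in\widetilde{M}$ and any $s_1, s_2, T > 0$, concatenating an admissible curve from $a$ to $\tx_1$ defined on $[0,s_1]$, a curve from $\tx_1$ to $\tx_2$ on $[0,T]$, and a curve from $\tx_2$ to $b$ on $[0,s_2]$ produces a competitor for $h^{s_1+T+s_2}_{\widetilde{H}}(a,b)$, yielding
\begin{equation*}
h^{s_1+T+s_2}_{\widetilde{H}}(a,b) \leq h^{s_1}_{\widetilde{H}}(a,\tx_1) + h^{T}_{\widetilde{H}}(\tx_1,\tx_2) + h^{s_2}_{\widetilde{H}}(\tx_2,b).
\end{equation*}
Adding $c_u(H)(s_1+T+s_2)$ to both sides gives the analogous inequality for the normalized quantities that enter \eqref{Peierlsbarrier}.

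Next, fix $s_1, s_2 > 0$ and choose a sequence $T_n\to +\infty$ realizing the liminf for $h_{\widetilde{H}}(\tx_1,\tx_2)$, so that $h^{T_n}_{\widetilde{H}}(\tx_1,\tx_2) + c_u(H)T_n \to h_{\widetilde{H}}(\tx_1,\tx_2) < +\infty$ by hypothesis. Setting $S_n := s_1 + T_n + s_2 \to +\infty$ and substituting $T = T_n$ yields
\begin{equation*}
h^{S_n}_{\widetilde{H}}(a,b) + c_u(H)S_n \leq \bigl[h^{s_1}_{\widetilde{H}}(a,\tx_1) + c_u(H)s_1\bigr] + \bigl[h^{T_n}_{\widetilde{H}}(\tx_1,\tx_2) + c_u(H)T_n\bigr] + \bigl[h^{s_2}_{\widetilde{H}}(\tx_2,b) + c_u(H)s_2\bigr].
\end{equation*}
Taking $\liminf$ as $n\to +\infty$ and using $\liminf_n [h^{S_n}_{\widetilde{H}}(a,b) + c_u(H)S_n] \geq h_{\widetilde{H}}(a,b)$ (since a liminf along a subsequence of times is at least the liminf along all $T\to +\infty$), I obtain
\begin{equation*}
h_{\widetilde{H}}(a,b) \leq h^{s_1}_{\widetilde{H}}(a,\tx_1) + h^{s_2}_{\widetilde{H}}(\tx_2,b) + h_{\widetilde{H}}(\tx_1,\tx_2) + c_u(H)(s_1+s_2).
\end{equation*}

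To close the argument, I need to verify that $h^{s_1}_{\widetilde{H}}(a,\tx_1)$ and $h^{s_2}_{\widetilde{H}}(\tx_2,b)$ are finite, which is immediate either from Tonelli's theorem in the non-compact setting (invoked just after \eqref{finitetimepotential} via superlinearity of $\widetilde{L}$) or by evaluating the action on any smooth curve joining the prescribed endpoints in the prescribed time. Combined with the already noted lower bound $h_{\widetilde{H}}(a,b) > -\infty$, this gives $h_{\widetilde{H}}(a,b) \in \R$ for all $a,b\in\widetilde{M}$. The argument is essentially identical to Mather's in the compact case; there is no real obstacle, and the only mild point of care is checking that the liminf taken along the shifted sequence $S_n$ bounds the Peierls barrier $h_{\widetilde{H}}(a,b)$ from above, which is just the monotonicity of $\liminf$ under passing to subsequences.
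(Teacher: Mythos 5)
Your proof is correct and follows essentially the same route as the paper: both arguments concatenate connector curves from $a$ to $\tx_1$ and from $\tx_2$ to $b$ onto a sequence of curves nearly realizing $h_{\widetilde{H}}(\tx_1,\tx_2)$, and then use that the $\liminf$ along the shifted time sequence dominates $h_{\widetilde{H}}(a,b)$. The only cosmetic difference is that the paper uses unit-speed shortest geodesics as connectors, bounding their action by the explicit constant $A=\sup_{\|v\|=1}L(x,v)$, whereas you bound the connectors by the finite-time potentials $h^{s_1}_{\widetilde{H}}$ and $h^{s_2}_{\widetilde{H}}$, which amounts to the same thing.
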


\begin{Proof}
Let $\tz_1, \tz_2 \in \widetilde{M}$. Consider the shortest unit-speed geodesics connecting $\tz_1$ to $\tx_1$ and $\tx_2$ to $\tz_2$, and denote them respectively by $\s_{i}$ and $\s_{f}$. Moreover, let
$$
A:= \sup_{\|\tilde{v}\|=1, \tx\in \widetilde{M}} \widetilde{L}(\tx,\tilde{v}) = 
\sup_{\|{v}\|=1, x\in {M}} {L}(x, {v}) < + \infty\,.
$$
Since $h_{\widetilde{H}}(\tx_1,\tx_2)<+\infty$, then there exist $\g_n:[0,T_n]\rightarrow \widetilde{M}$, with $T_n\rightarrow +\infty$, such that $\g_n(0)=\tilde{x}_1$, $\g_n(T_n)=\tx_2$ and 
$$
A_{\widetilde{L}+c_u(H)}(\g_n) \longrightarrow h_{\widetilde{H}}(\tx_1,\tx_2)\qquad {\rm as}\; n\rightarrow +\infty\,.
$$
Let $\tilde{T}_n:= T_n+ d(\tx_1,\tz_1) + d(\tx_2,\tz_2)$. We define new curves
$\s_n: [0,\tilde{T}_n] \rightarrow \widetilde{M}$ connecting $\tz_1$ to $\tz_2$, by $\s_n:=\s_i*\g_n*\s_f$.
Then:
\beqano
h_{\widetilde{H}}^{\widetilde{T}_n}(\tz_1,\tz_2)+ c_u(H)\widetilde{T}_n &\leq& A_{\widetilde{L}+c_u(H)}(\s_n)  \\
&=& A_{\widetilde{L}+c_u(H)}(\s_i) + A_{\widetilde{L}+c_u(H)}(\g_n)  + A_{\widetilde{L}+c_u(H)}(\s_f) \\
&\leq& A_{\widetilde{L}+c_u(H)}(\g_n) + [A+c_u(H)]\big(d(\tx_1,\tz_1) + d(\tx_2,\tz_2)\big)\,.
\eeqano
Therefore:
\beqano
h_{\widetilde{H}}(\tz_1,\tz_2) &\leq& \liminf_{n\rightarrow \infty}  \left( h_{\widetilde{H}}^{\widetilde{T}_n}(\tz_1,\tz_2)+ c_u(H)\widetilde{T}_n \right) \\
&\leq& \liminf_{n\rightarrow \infty} \left( A_{\widetilde{L}+c_u(H)}(\g_n) + [A+c_u(H)]\big(d(\tx_1,\tz_1) + d(\tx_2,\tz_2)\big)\right)  \\
&=& h_{\widetilde{H}}(\tx_1,\tx_2) + [A+c_u(H)]\big(d(\tx_1,\tz_1) + d(\tx_2,\tz_2)\big) <\infty.
\eeqano
\end{Proof}

\begin{Rem}\label{rem2}
It is not difficult to construct an example of a Tonelli Lagrangian on a non-compact manifold, whose Peierls barrier is identically $+\infty$. For example (see \cite{CI99} for more details) one could consider
\begin{eqnarray*}
L: \rT \R^2 & \longrightarrow& \R\\
(x,v) &\longmapsto& \frac{1}{2}\|v\|^2 + U(x)
\end{eqnarray*}
where $\|\cdot\|$ denotes the euclidean norm on $\R^2$ and $U(x)$ is a smooth function such that $U(x) \geq 0$ for all $x$, $U(x)=\frac{1}{\|x\|}$ for $\|x\|\geq 2$ and $U(x)=2$ for $0\leq \|x\| \leq 1$.
However this Lagrangian is not a lift of a Lagrangian on a closed surface (see also Question II in the Introduction).\\

\end{Rem}


Let us prove that the property of having a finite Peierls barrier is somehow simplectically invariant. Namely, 
if $\Psi: \rT^*M \longrightarrow \rT^*M$ is a symplectomorphism such that $H':=H\circ \Psi^{-1}$ is still of Tonelli type,
and we denote by  $\widetilde{H'}$ and $\widetilde{L'}$ respectively the lifts of this Hamiltonian and the corresponding Lagrangian to the universal cover, 
then the following is true.

\begin{Theo}\label{thm2}
$h_{\widetilde{H}}$ is finite if and only if $h_{\widetilde{H}'}$ is finite.
\end{Theo}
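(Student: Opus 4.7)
The plan is to exploit the same symplectic computation as in Proposition \ref{thm1}, adapted to the universal cover where $T^*\widetilde{M}$ is simply connected so that the closed form $\widetilde{\Psi}^*\widetilde{\lambda}-\widetilde{\lambda}$ is automatically exact. By Lemma \ref{lemmafinitepeierls} it is enough to show: if $h_{\widetilde{H}}(\tx_1,\tx_2)<+\infty$ for some pair, then $h_{\widetilde{H'}}(\ty_1,\ty_2)<+\infty$ for some pair; the converse will follow by replacing $\Psi$ with $\Psi^{-1}$. Fix a lift $\widetilde{\Psi}:T^*\widetilde{M}\to T^*\widetilde{M}$ and take a primitive $F: T^*\widetilde{M}\to \R$ with $dF=\widetilde{\Psi}^*\widetilde{\lambda}-\widetilde{\lambda}$, which exists because $\widetilde{M}$ is simply connected.

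Next, choose Tonelli minimizers $\g_n:[0,T_n]\to \widetilde{M}$ with $\g_n(0)=\tx_1$, $\g_n(T_n)=\tx_2$, $T_n\to +\infty$, and $A_{\widetilde{L}+c_u(H)}(\g_n)\to h_{\widetilde{H}}(\tx_1,\tx_2)$. Lift them to $\G_n(t)=(\g_n(t),\partial L/\partial v)$ in $T^*\widetilde{M}$ and set $\G'_n:=\widetilde{\Psi}^{-1}(\G_n)$, which is an orbit of $\widetilde{H'}$; let $\g'_n$ be its projection to $\widetilde{M}$. Since $\widetilde{H}(\G_n)=\widetilde{H'}(\G'_n)$ and the Lagrangian action on orbits is computed by Fenchel–Legendre duality as $\int\widetilde{\lambda}+(k-E_n)T_n$, the energy terms cancel in the difference of actions, leaving
\[
A_{\widetilde{L}+k}(\g_n)-A_{\widetilde{L'}+k}(\g'_n)=\int_{\G'_n}\bigl(\widetilde{\Psi}^*\widetilde{\lambda}-\widetilde{\lambda}\bigr)=F(\G'_n(T_n))-F(\G'_n(0)).
\]
Using Proposition \ref{thm1} to take $k=c_u(H)=c_u(H')$, this reduces the theorem to controlling the boundary term $F(\G'_n(T_n))-F(\G'_n(0))$.

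This is where the main obstacle lies, and the resolution is an a-priori compactness argument. Since $A_{\widetilde{L}+c_u(H)}(\g_n)$ is bounded and $T_n\to\infty$, the average action $A_{\widetilde{L}}(\g_n)/T_n$ is bounded, so by the a-priori compactness estimate \cite[Lemma 3.2.1]{CI99} (exactly as in the proof of Theorem A) the energies $E_n$ are bounded; superlinearity then forces $\|\dot{\g}_n(0)\|$ and $\|\dot{\g}_n(T_n)\|$ to be bounded, so $\G_n(0)\in\{\tx_1\}\times K_1$ and $\G_n(T_n)\in\{\tx_2\}\times K_2$ for compact sets $K_i\subset T^*_{\tx_i}\widetilde{M}$. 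Hence $\G'_n(0)=\widetilde{\Psi}^{-1}(\G_n(0))$ and $\G'_n(T_n)=\widetilde{\Psi}^{-1}(\G_n(T_n))$ lie in compact subsets of $T^*\widetilde{M}$, the correction $F(\G'_n(T_n))-F(\G'_n(0))$ is uniformly bounded, and up to subsequence $\g'_n(0)\to \ty_1$, $\g'_n(T_n)\to \ty_2$ in $\widetilde{M}$.

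Finally, to convert the bound at the variable endpoints $\g'_n(0),\g'_n(T_n)$ into a bound at the fixed pair $(\ty_1,\ty_2)$, I would concatenate $\g'_n$ with the shortest unit-speed geodesic segments joining $\ty_1$ to $\g'_n(0)$ and $\g'_n(T_n)$ to $\ty_2$, exactly as in Lemma \ref{lemmafinitepeierls}: since these segments have uniformly bounded length $o(1)$ and hence uniformly bounded $\widetilde{L}'$-action (by the supremum $A'=\sup_{\|v\|=1}L'(x,v)<+\infty$), one obtains curves from $\ty_1$ to $\ty_2$ of time $T_n+o(1)$ whose $(\widetilde{L'}+c_u(H))$-action stays bounded. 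Taking the liminf yields $h_{\widetilde{H'}}(\ty_1,\ty_2)<+\infty$, whence by Lemma \ref{lemmafinitepeierls} $h_{\widetilde{H'}}$ is finite everywhere. The converse implication is obtained by applying the same argument to the symplectomorphism $\Psi^{-1}$, since $H=H'\circ\Psi^{-1}$ is Tonelli whenever $H'$ is.
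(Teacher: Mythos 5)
Your proposal is correct and follows essentially the same route as the paper's proof: reduction to a single pair of points via Lemma \ref{lemmafinitepeierls}, transport of Tonelli minimizers by the lifted symplectomorphism, the action comparison via the exact primitive $F$ of $\widetilde{\Psi}^*\widetilde{\lambda}-\widetilde{\lambda}$ with the boundary term controlled by a-priori compactness of the endpoint momenta, and the final geodesic concatenation. The only (immaterial) difference is the direction convention for $\widetilde{\Psi}$ versus $\widetilde{\Psi}^{-1}$, which depends on whether one writes $H'=H\circ\Psi$ or $H'=H\circ\Psi^{-1}$.
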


\begin{Proof}
It suffices to prove that $h_{\widetilde{H}}$ being finite implies that $h_{\widetilde{H}'}$ is also finite. Using the invertibility of ${\Psi}$, one can similarly prove the other implication.\\
Suppose that there exist $\tx_1,\tx_2 \in \widetilde{M}$ such that $h_{\widetilde{H}}(\tx_1,\tx_2)<+\infty$. Then, we can find $\g_n:[0,T_n]\rightarrow \widetilde{M}$, with $T_n\rightarrow +\infty$, such that $\g_n(0)=\tilde{x}_1$, $\g_n(T_n)=\tx_2$ and 
\beqa{formula1}
A_{\widetilde{L}+c_u(H)}(\g_n) \longrightarrow h_{\widetilde{H}}(\tx_1,\tx_2)\qquad {\rm as}\; n\rightarrow +\infty\,.
\eeqa
Up to applying Tonelli theorem, we can assume that each $\g_n$ is a  Tonelli minimizer and hence an orbit of the flow. In particular, since they have bounded actions, using \cite[Lemma 3-2.1]{CI99} one can deduce that their velocities are bounded, \ie $\|\dot{\g}_n(t)\| \leq C $ for all $n$ and for all $t\in [0,T_n].$ 
Let us now consider the corresponding flow lines for the lifted Hamiltonian:
$$
\G_n(t):= \left(\g_n(t), \frac{\partial L}{\partial v}(\g_n(t),\dot{\g}_n(t)) \right).
$$
Since $\widetilde{\Psi}$ is a symplectomorphism, then $\G'_n:=\widetilde{\Psi}(\G_n)$ is still an orbit of $\widetilde{H'}$. In particular, since $\dot{\g}_n(0)$ and $\dot{\g}_n(T_n)$ all lie in a compact region, then the endpoints $\G_n'(0)$ and $\G_n'(T_n)$ lie in a compact region of $\rT^*\widetilde{M}$. Therefore, up to extracting a convergent subsequence, we can assume that the endpoints of the projected curves $\g'_n:= \pi \G'_n$ converge as $n\rightarrow +\infty$:
\beqa{formula2}
\g_n'(0) \rightarrow \tz_1 \qquad {\rm and} \qquad \g_n'(T_n) \rightarrow \tz_2\,.
\eeqa
Let us now consider  the shortest unit-speed geodesics connecting $\tz_1$ to $\g'_n(0)$ and $\g'_n(T_n)$ to $\tz_2$, denoting them respectively $\s_{n,i}$ and $\s_{n,f}$.
Let $\tilde{T}_n:= T_n+ d(\g'_n(0),\tz_1) + d(\g'_n(T_n),\tz_2)$ and  define a new sequence of curves
$\s_n: [0,\tilde{T}_n] \rightarrow \widetilde{M}$ connecting $\tz_1$ to $\tz_2$, by $\s_n:=\s_{n,i}*\g'_n*\s_{n,f}$.
Then (using also Proposition \ref{thm1}):
\beqano
h_{\widetilde{H}'}^{\widetilde{T}_n}(\tz_1,\tz_2)+ c_u(H')\widetilde{T}_n &\leq& A_{\widetilde{L}'+c_u(H')}(\s_n)  \\
&=& A_{\widetilde{L}'+c_u(H')}(\s_{n,i}) + A_{\widetilde{L}'+c_u(H')}(\g'_n)  + A_{\widetilde{L}'+c_u(H')}(\s_{n,f}) \\
&\leq& A_{\widetilde{L}'+c_u(H')}(\g'_n) + [A+c_u(H')]\big(d(\g'_n(0),\tz_1) + d(\g'_n(T_n),\tz_2)\big)\,,
\eeqano
where $$
A:= \sup_{\|\tilde{v}\|=1, \tx\in \widetilde{M}} \widetilde{L}(\tx,\tilde{v}) = 
\sup_{\|{v}\|=1, x\in {M}} {L}(x, {v}) < + \infty\,.
$$

Let $\widetilde{\l}$ denote the lift of the Liouville form $\l$ to $\rT^*\widetilde{M}$. Observe that since $\widetilde{\Psi}$ is a symplectomorphism, then $\widetilde{\Psi}^*\widetilde{\l}-\widetilde{\l} = d\widetilde{F}$ for some $F:\rT^*\widetilde{M}\rightarrow \R$.
We want to show that 
$$
A_{\widetilde{L}'+c_u(H')}(\g'_n) = A_{\widetilde{L}+c_u(H)}(\g_n) + F(\G_n(T_n)) - F(\G_n(0))\,.
$$

Using that the Hamiltonian is constant along the orbits and that $\G_n(T_n)$ and $\G_n(0)$ all lie in the same compact region, we obtain:
\beqano
A_{\widetilde{L}'+c_u(H')}(\g'_n) &=& \int_0^{T_n} \left(\widetilde{L}'(\g_n',\dot{\g_n}') + c_u(H')\right)\,dt  \\
&=& \int_{\G'_n} \widetilde{\l}  + \int_0^{T_n}\left(c_u(H')- \widetilde{H}'(\G_n'(t))\right) \,dt  \\
&= & \int_{\G_n} \widetilde{\Psi}^*\widetilde{\l}  + \int_0^{T_n} \left(c_u(H)- \widetilde{H}(\G_n(t))\right)\,dt  \\
&=& \int_{\G_n} (\widetilde{\Psi}^*\widetilde{\l}-\widetilde{\l})  + \int_{\G_n} \widetilde{\l} + \int_0^{T_n}\left(c_u(H) - \widetilde{H}(\G_n(t))\right)\,dt  \\
&=& \int_{\G_n} (\widetilde{\Psi}^*\widetilde{\l}-\widetilde{\l})  + \int_0^{T_n} \left(\widetilde{L}(\g_n,\dot{\g}_n) + c_u(H)\right)\,dt \\
&=& \int_{\G_n} (\widetilde{\Psi}^*\widetilde{\l}-\widetilde{\l}) + A_{\widetilde{L}+c_u(H)}(\g_n)  \\
&=& A_{\widetilde{L}+c_u(H)}(\g_n) + F(\G_n(T_n)) - F(\G_n(0)) \\
&\leq& A_{\widetilde{L}+c_u(H)}(\g_n) + const. \\  
\eeqano

Then, using  (\ref{formula1}),  (\ref{formula2}) and Proposition \ref{thm1}, we can conclude:
\beqano
h_{\widetilde{H}'}(\tz_1,\tz_2) &\leq& \liminf_{n\rightarrow \infty} \left( h_{\widetilde{H}'}^{\widetilde{T}_n}(\tz_1,\tz_2)+ c_u(H')\widetilde{T}_n\right) \\
&=& \liminf_{n\rightarrow \infty} \left(A_{\widetilde{L}'+c_u(H')}(\g'_n) + [A+c_u(H')]\big(d(\g'_n(0),\tz_1) + d(\g'_n(T_n),\tz_2)\big) \right) \\
&\leq& \liminf_{n\rightarrow \infty} \left(A_{\widetilde{L}+c_u(H)}(\g_n) +  {const} \right)  \\
&=& h_{\widetilde{H}}(\tx_1,\tx_2) + { const} < +\infty.
\eeqano
\end{Proof}

\subsection{Aubry set}
In the study of the dynamics of the system and its action-minimizing properties, a very important r\^ole is played by the set in which the Peierls barrier vanishes: 
$$
\cA_{\widetilde{H}} := \{\tx\in \widetilde{M}:\; h_{\widetilde{H}}(\tx,\tx)=0\}\,.
$$
This set is usually called {\it projected Aubry set}  (or  {\it Peierls set}) of $\widetilde{H}$ (see \cite{Mat93,Mane91,Fathibook}).

It is important to point out that while in the compact case this set is always non-empty, in the non-compact case   $\cA_{\widetilde{H}}$ might be empty, even if the Peierls barrier is finite, as it is shown in Section \ref{examples}.

\begin{Rem} It is straightforward to check the following behaviour under coverings $p:M_{1}\to M_{2}$:
$c_{1}\leq c_{2}$, $h^{1}\geq h^{2}$ and $p\cA^{1}\subset \cA^2$, where $c_i$, $h^i$ and $\cA^i$ are respectively the critical value, the Peierls barrier and the Aubry set in $M_i$.
\end{Rem}

We want to show that being empty or not, is also a symplectic invariant property of the system, in the same sense as we explained above.

\begin{Theo}\label{thm3}
Let $\Psi: \rT^*M \longrightarrow \rT^*M$ be a symplectomorphism such that $H':=H\circ \Psi^{-1}$ is still of Tonelli type. Then, $\cA_{\widetilde{H}} \neq \emptyset$  if and only if $\cA_{\widetilde{H}'} \neq \emptyset$.
\end{Theo}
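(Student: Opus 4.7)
The strategy parallels that of Theorem~\ref{thm2}, but requires a finer control on the endpoint momenta of the Tonelli minimizers. By the invertibility of $\widetilde{\Psi}$, it suffices to prove one implication, say $\cA_{\widetilde{H}} \neq \emptyset \Rightarrow \cA_{\widetilde{H}'} \neq \emptyset$. Choose $\tilde{x} \in \cA_{\widetilde{H}}$; by definition there exist Tonelli-minimizing loops $\g_n : [0, T_n] \to \widetilde{M}$ at $\tilde{x}$ with $T_n \to +\infty$, $A_{\widetilde{L}+c_u(H)}(\g_n) \to 0$, and uniformly bounded velocities (via the same a-priori estimate used in the proof of Theorem~\ref{thm2}).

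Lift these to Hamiltonian orbits $\G_n(t) := \left(\g_n(t), \partial L/\partial v(\g_n(t), \dot{\g}_n(t))\right)$ of $\widetilde{H}$, and apply $\widetilde{\Psi}$ to get Hamiltonian orbits $\G'_n := \widetilde{\Psi}(\G_n)$ of $\widetilde{H}'$, projecting to curves $\g'_n$. The action transformation formula derived in the proof of Theorem~\ref{thm2} reads
\[
A_{\widetilde{L}'+c_u(H')}(\g'_n) = A_{\widetilde{L}+c_u(H)}(\g_n) + \widetilde{F}(\G_n(T_n)) - \widetilde{F}(\G_n(0)),
\]
with $\widetilde{F} : \rT^*\widetilde{M} \to \R$ a primitive of $\widetilde{\Psi}^* \widetilde{\l} - \widetilde{\l}$ (well-defined as $\rT^*\widetilde{M}$ is simply connected). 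By the velocity bound, $\G_n(0)$ and $\G_n(T_n)$ lie in a compact subset of the fiber $\rT^*_{\tilde{x}}\widetilde{M}$; up to extraction, $\G_n(0) \to P_0$ and $\G_n(T_n) \to P_1$ in that fiber.

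The pivotal step is to argue that we may arrange $P_0 = P_1$. This should follow from a graph-like property of the projected Aubry set fiber over $\tilde{x}$: minimizing loops at an Aubry point with asymptotically vanishing action become ``almost semi-static'' in both directions, so their endpoint momenta must accumulate on $\cA^*_{\widetilde{H}} \cap \rT^*_{\tilde{x}}\widetilde{M}$, which behaves as a single point even though on the universal cover the full Mather set can fail to be a graph (cf.\ the second example in Section~\ref{examples}). Granting $P_0 = P_1 =: P$, the correction term vanishes and $A_{\widetilde{L}'+c_u(H')}(\g'_n) \to 0$. Setting $\tilde{z} := \pi(\widetilde{\Psi}(P))$, we have $\g'_n(0), \g'_n(T_n) \to \tilde{z}$. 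Closing up each $\g'_n$ with short unit-speed geodesics from $\tilde{z}$ to $\g'_n(0)$ and from $\g'_n(T_n)$ to $\tilde{z}$ (whose actions go to $0$ since their lengths do) produces loops $\s_n$ at $\tilde{z}$ with times $\to +\infty$ and $A_{\widetilde{L}'+c_u(H')}(\s_n) \to 0$. Every loop in $\widetilde{M}$ projects to a contractible loop in $M$, so its action is non-negative by definition of $c_u(H') = c_u(H)$ (Proposition~\ref{thm1}); hence $h_{\widetilde{H}'}(\tilde{z}, \tilde{z}) = 0$ and $\tilde{z} \in \cA_{\widetilde{H}'}$.

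The main obstacle is the step $P_0 = P_1$. A self-contained argument should leverage the semi-static nature of the limiting flow lines of the $\G_n$'s near their endpoints. Note that the full symplectic invariance $\cA^*_{\widetilde{H}'} = \widetilde{\Psi}^{-1}(\cA^*_{\widetilde{H}})$ proved later as Theorem~\ref{invarianceAubry} trivially implies $P_0 = P_1$, but the present statement is considerably weaker and should admit a more elementary treatment relying only on the minimization property of points in $\cA_{\widetilde{H}}$.
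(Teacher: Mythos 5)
Your reduction to the transported action identity follows the paper's strategy for Theorem \ref{thm2}, but the argument has a genuine gap exactly where you flag it: the claim $P_0=P_1$. A fixed-endpoint Tonelli minimizer $\g_n$ with $\g_n(0)=\g_n(T_n)=\tx$ solves the Euler--Lagrange equation on $(0,T_n)$ but generically has a corner at the basepoint, so $\G_n(0)\neq\G_n(T_n)$ and the correction term $\widetilde{F}(\G_n(T_n))-\widetilde{F}(\G_n(0))$ has no reason to vanish. Your proposed justification --- that the endpoint momenta of almost-minimizing loops at an Aubry point must accumulate on the single point of $\cA^*_{\widetilde{H}}$ over $\tx$ --- appeals to the graph property of the Aubry set and to a calibration argument for almost-minimizing curves that the paper only develops afterwards (via the Fathi--Siconolfi theorem and the phase-space barrier of Section \ref{sec5}); as written it is a statement of what ``should'' hold rather than a proof, and making it rigorous in the non-compact setting is essentially of the same order of difficulty as item (4) of Theorem B, which you were trying to avoid.

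The paper sidesteps the issue entirely by a different choice of minimizers. Starting from loops $\a_n$ based at $\tx$ with $A_{\widetilde{L}+c_u(H)}(\a_n)\to 0$, it applies Tonelli's theorem on the free loop space of \emph{contractible loops of period $T_n$ in the compact manifold $M$}. The resulting curves $\g_n$ are genuine closed periodic orbits, and being minimizers in their class their actions are squeezed: $0\leq A_{\widetilde{L}+c_u(H)}(\g_n)\leq A_{\widetilde{L}+c_u(H)}(\a_n)\to 0$. Their phase-space lifts $\G_n$ are then \emph{closed} curves in $\rT^*\widetilde{M}$, so
$\int_{\G_n}(\widetilde{\Psi}^*\widetilde{\lambda}-\widetilde{\lambda})=\int_{\G_n}d\widetilde{F}=0$
identically --- no limit argument and no control of endpoint momenta is needed. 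The price is that the new loops are no longer based at $\tx$, but this is harmless: their basepoints may be chosen in a fixed fundamental domain, hence in a compact region, which is all that is required to extract the limit point $\tz$ and close up with short geodesics as you do in your final step (which is correct as stated).
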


\begin{Proof}
We shall prove that $\cA_{\widetilde{H}} \neq \emptyset$  implies  $\cA_{\widetilde{H}'} \neq \emptyset$.
Using the invertibility of ${\Psi}$, one can similarly prove the other implication. 
Suppose that $\tx\in \cA_{\widetilde{H}}$, \ie there exists $\tx \in \widetilde{M}$ such that $h_{\widetilde{H}}(\tx,\tx)=0$. 
Then, we can find closed loops $\a_n:[0,T_n]\rightarrow \widetilde{M}$, with $T_n\rightarrow +\infty$, such that $\a_n(0)=\tilde{x}$ and 
\beqano
A_{\widetilde{L}+c_u(H)}(\a_n) \longrightarrow 0\qquad {\rm as}\; n\rightarrow +\infty\,.
\eeqano
These curves $\a_n$ are not necessarily closed orbits. Therefore, for any given $T_n$, we can apply Tonelli Theorem for closed contractible loops in $M$ with period $T_n$ and obtain closed orbits that we can lift to $\widetilde{M}$. We shall denote these new orbits by 
$\g_n:[0,T_n]\rightarrow \widetilde{M}$. Observe that since they are Tonelli minimizers in their respective class, then
\beqa{formula3}
0\leq A_{\widetilde{L}+c_u(H)}(\g_n) \leq A_{\widetilde{L}+c_u(H)}(\a_n)   \longrightarrow 0\qquad {\rm as}\; n\rightarrow +\infty\,.
\eeqa
It is important to notice that it is not true anymore that $\g_n(0)=\tilde{x}$, but we can nevertheless assume that these ``end-points'' are all contained in a compact region of $\widetilde{M}$. 
In particular, since they have bounded actions, using \cite[Lemma 3-2.1]{CI99} one can deduce that their velocities are bounded, \ie $\|\dot{\g}_n(t)\| \leq C $ for all $n$ and for all $t\in [0,T_n].$ 
Let us now consider the corresponding flow lines for the lifted Hamiltonian:
$$
\G_n(t):= \left(\g_n(t), \frac{\partial \widetilde{L}}{\partial v}(\g_n(t),\dot{\g}_n(t)) \right).
$$
Since $\widetilde{\Psi}$ is a symplectomorphism, then $\G'_n:=\widetilde{\Psi}(\G_n)$ is still a closed orbit of $\widetilde{H'}$. In particular, since all $\dot{\g}_n(0)$ lie in a compact region, then all $\G_n'(0)$  lie in a compact region of $\rT^*\widetilde{M}$. Therefore, up to extracting a convergent subsequence, we can assume that 
\beqa{formula4}
\g_n'(0) \rightarrow \tz \qquad {\rm as}\; n\rightarrow +\infty\,,
\eeqa
where $\g'_n:= \pi \G'_n$.
Let us now consider the  shortest unit-speed geodesics connecting $\tz$ to $\g'_n(0)$ and $\g'_n(0)$ to $\tz$, denoting them respectively $\s_{n,i}$ and $\s_{n,f}$.
Let $\tilde{T}_n:= T_n+ 2d(\g'_n(0),\tz) $ and  define a new sequence of closed curves
$\s_n: [0,\tilde{T}_n] \rightarrow \widetilde{M}$, by $\s_n:=\s_{n,i}*\g'_n*\s_{n,f}$.
Then (using also Proposition \ref{thm1}):
\beqano
h_{\widetilde{H}'}^{\widetilde{T}_n}(\tz,\tz)+ c_u(H')\widetilde{T}_n &\leq& A_{\widetilde{L}'+c_u(H')}(\s_n) \\
&=& A_{\widetilde{L}'+c_u(H')}(\s_{n,i}) + A_{\widetilde{L}'+c_u(H')}(\g'_n)  + A_{\widetilde{L}'+c_u(H')}(\s_{n,f}) \\
&\leq& A_{\widetilde{L}'+c_u(H')}(\g'_n) + 2[A+c_u(H')] d(\g'_n(0),\tz)\,,
\eeqano
where 
$$
A:= \sup_{\|\tilde{v}\|=1, \tx\in \widetilde{M}} \widetilde{L}(\tx,\tilde{v}) = 
\sup_{\|{v}\|=1, x\in {M}} {L}(x, {v}) < + \infty\,.
$$

If as before  $\widetilde{\l}$ denotes the lift of the Liouville form $\l$ to $\rT^*\widetilde{M}$ and 
$F:\rT^*\widetilde{M}\rightarrow \R$ is such that $\widetilde{\Psi}^*\widetilde{\l}-\widetilde{\l} = d\widetilde{F}$, then we want to prove that:
$$
A_{\widetilde{L}'+c_u(H')}(\g'_n) = A_{\widetilde{L}+c_u(H)}(\g_n)\,.
$$

Using in fact that the Hamiltonian is constant along the orbits, we obtain:
\beqano
A_{\widetilde{L}'+c_u(H')}(\g'_n) &=& \int_0^{T_n} \left(\widetilde{L}'(\g_n',\dot{\g_n}') + c_u(H')\right)\,dt  \\
&=& \int_{\G'_n} \widetilde{\l}  + \int_0^{T_n}\left(c_u(H')- \widetilde{H}'(\G_n'(t))\right) \,dt  \\
&= & \int_{\G_n} \widetilde{\Psi}^*\widetilde{\l}  + \int_0^{T_n} \left(c_u(H)- \widetilde{H}(\G_n(t))\right)\,dt  \\
&=& \int_{\G_n} (\widetilde{\Psi}^*\widetilde{\l}-\widetilde{\l})  + \int_{\G_n} \widetilde{\l} + \int_0^{T_n}\left(c_u(H) - \widetilde{H}(\G_n(t))\right)\,dt  \\
&=& \int_{\G_n} (\widetilde{\Psi}^*\widetilde{\l}-\widetilde{\l})  + \int_0^{T_n} \left(\widetilde{L}(\g_n,\dot{\g}_n) + c_u(H)\right)\,dt  \\
&=& \int_{\G_n} d\widetilde{F} + A_{\widetilde{L}+c_u(H)}(\g_n)  \\
&=& A_{\widetilde{L}+c_u(H)}(\g_n). \\  
\eeqano

Then, using  (\ref{formula3}),  (\ref{formula4}) and Proposition \ref{thm1}, we can conclude:
\beqano
0\leq h_{\widetilde{H}'}(\tz,\tz) &\leq& \liminf_{n\rightarrow \infty} \left(h_{\widetilde{H}'}^{\widetilde{T}_n}(\tz,\tz)+ c_u(H')\widetilde{T}_n\right) \\
&=& \liminf_{n\rightarrow \infty} \left(A_{\widetilde{L}'+c_u(H')}(\g'_n) + 2\,[A+c_u(H')] d(\g'_n(0),\tz) \right) \\
&=& \liminf_{n\rightarrow \infty} A_{\widetilde{L}+c_u(H)}(\g_n)  = 0\,.
\eeqano

Therefore, $\tz\in \cA_{\widetilde{H}'}$.

\end{Proof}

This concludes the proof of the first three items in Theorem B.\\

\vspace{10 pt}

The projected Aubry set is closely related to the existence and the properties of viscosity solutions and subsolutions of Hamilton-Jacobi equation, as pointed out by Fathi \cite{Fathibook}. Let us recall the notion of viscosity subsolution, supersolution and solution. \\

Let $U$ be an open set of $M$. We shall say that a continuous function $u: U\longrightarrow \R$ is a {\it viscosity subsolution} (resp. {\it supersolution}) of $\widetilde{H}(x,d_xu)=k$, if for each $C^1$ function $\phi:U\longrightarrow \R$ satisfying $\phi\geq u$ (resp. $\phi\leq u$), and each point $x_0\in U$ satisfying $\phi(x_0)=u(x_0)$, we have $\widetilde{H}(x,d_{x_0}\phi) \leq k$ (resp. $\widetilde{H}(x,d_{x_0}\phi) \geq k$). 
A function is a {\it viscosity solution} if it is both a viscosity subsolution and a viscosity supersolution.
Observe, that since $\widetilde{H}(x,p)$ is convex and superlinear in $p$, it is well-know (see for instance \cite{Barles}), that a function $u:U\longrightarrow \R$ is a viscosity subsolution of $\widetilde{H}(x,d_xu)=k$ if and only if it is Lipschitz and $\widetilde{H}(x,d_xu)\leq k$ almost everywhere in $U$.

\begin{Rem}
It turns out that $c_u(H)$ is the infimum of the $k$'s for which $\widetilde{H}(x,d_xu)=k$ admits a subsolution and the unique energy value in which viscosity solutions exist \cite{CIPP98, Fathibook}. In particular, a subsolution corresponding to $k=c_u(H)$ is often called {\it critical subsolution}.
\end{Rem}

The relation between subsolutions and the projected Aubry set is explained by the following result.

\begin{Theo}[{\bf Fathi-Siconolfi \cite{FathiSiconolfi}}]\label{FathiSic}
For a point $x_0\in \widetilde{M}$, the following conditions are equivalent:
\begin{itemize}
\item[{\rm i)}] the point $x_0$ is in $\cA_{\widetilde{H}}$;
\item[{\rm ii)}] every critical subsolution is differentiable at $x_0$;
\item[{\rm iii)}] there does not exist a critical subsolution $u$ which is strict at $x_0$, \ie  $\widetilde{H}(x_0,d_{x_0}u)< c_u(H)$.\\
\end{itemize}
\end{Theo}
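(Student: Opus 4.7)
The plan is to prove the cycle $(\mathrm{i}) \Rightarrow (\mathrm{ii}) \Rightarrow (\mathrm{iii}) \Rightarrow (\mathrm{i})$. The main tool is the Fenchel-Legendre inequality: for any critical subsolution $u$ (Lipschitz with $\widetilde{H}(x, d_x u) \leq c_u(H)$ a.e., as noted in the Remark preceding the theorem) and any absolutely continuous curve $\gamma:[a,b]\to\widetilde{M}$,
\[
u(\gamma(b)) - u(\gamma(a)) \leq A_{\widetilde{L}+c_u(H)}(\gamma),
\]
with equality forcing $d_{\gamma(t)}u = \partial_v\widetilde{L}(\gamma(t),\dot\gamma(t))$ almost everywhere.

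For $(\mathrm{i})\Rightarrow(\mathrm{ii})$, take $x_0\in\cA_{\widetilde{H}}$ and loops $\gamma_n:[0,T_n]\to\widetilde{M}$ based at $x_0$ with $T_n\to\infty$ and $A_{\widetilde{L}+c_u(H)}(\gamma_n)\to 0$. The velocity bound of \cite[Lemma 3-2.1]{CI99} places $\dot\gamma_n(0)$ in a compact set; along a subsequence $\dot\gamma_n(0)\to v_\infty$ and the $\gamma_n$'s converge on compact time intervals to a complete Euler-Lagrange orbit $\gamma_\infty$ through $(x_0,v_\infty)$. Passing to the limit in the Fenchel-Legendre inequality along $\gamma_n$ shows any critical subsolution $u$ is calibrated by $\gamma_\infty$, so $d_{x_0}u = \partial_v\widetilde{L}(x_0,v_\infty)$ at any differentiability point. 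This common value being independent of $u$, the convexity of Clarke's generalized gradient forces genuine differentiability of $u$ at $x_0$.

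The contrapositive of $(\mathrm{ii})\Rightarrow(\mathrm{iii})$ is handled by an explicit perturbation: given a critical subsolution $u$ strict at $x_0$, set $\tilde u(x) := u(x) - \epsilon\, d(x,x_0)$ in a geodesic ball around $x_0$ and glue back to $u$ outside via a cutoff; strictness at $x_0$, continuity of $\widetilde H$ in $p$, and smallness of $\epsilon$ ensure $\tilde u$ remains a critical subsolution, while the distance term destroys differentiability at $x_0$. For the contrapositive of $(\mathrm{iii})\Rightarrow(\mathrm{i})$, if $x_0\notin\cA_{\widetilde H}$, one uses that the Ma\~n\'e potential $\Phi_{c_u(H)}(x_0,\cdot)$ (or the Peierls barrier $h_{\widetilde H}(x_0,\cdot)$, when finite) is itself a critical subsolution, and combining it convexly with a fixed critical subsolution via a bump function supported in a neighborhood of $x_0$ disjoint from $\cA_{\widetilde H}$ -- and then smoothing as in Fathi-Siconolfi \cite{FathiSiconolfi} -- produces a $C^1$ critical subsolution strict at $x_0$.

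The main obstacle is the non-compactness of $\widetilde M$. The calibration argument in $(\mathrm{i})\Rightarrow(\mathrm{ii})$ relies on the velocity bound \cite[Lemma 3-2.1]{CI99} to substitute for compactness of $M$ in producing the limit orbit $\gamma_\infty$. The Fathi-Siconolfi regularization underlying $(\mathrm{iii})\Rightarrow(\mathrm{i})$ uses paracompactness and partitions of unity and extends to $\widetilde M$ intrinsically, but one must verify that critical subsolutions exist globally on $\widetilde M$ (guaranteed by the defining property of $c_u(H)$ as the infimum of $k$'s admitting subsolutions of $\widetilde H(x,d_xu)=k$) and handle the degenerate case $h_{\widetilde H}\equiv+\infty$ separately: there $\cA_{\widetilde H}=\emptyset$, and at every point a strict critical subsolution is obtained by a direct localized perturbation of any fixed one, so the contrapositive of $(\mathrm{iii})\Rightarrow(\mathrm{i})$ holds vacuously.
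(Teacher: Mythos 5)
The paper does not prove this theorem: it is presented as a quoted result of Fathi--Siconolfi (the attribution is in the theorem header), and the text passes directly to the Corollary that follows. There is therefore no in-paper proof to compare your argument against. A point worth flagging in the paper itself: the cited reference \cite{FathiSiconolfi} treats a compact base, while the theorem here is stated on the non-compact $\widetilde{M}$; the extension is invoked tacitly (plausibly via the non-compact weak KAM theory of Fathi--Maderna, cited elsewhere in the paper), which is precisely the issue your sketch is trying to supply.

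Taken on its own, your proposal follows the standard Fathi--Siconolfi cycle and correctly identifies non-compactness as the thing to control. Two places would need tightening in a full proof. In $(\mathrm{i})\Rightarrow(\mathrm{ii})$, differentiability of $u$ at $x_0$ requires $x_0$ to be an \emph{interior} point of a bi-infinite calibrated orbit; the loop limit you describe produces the forward ray from $(x_0,v_\infty)$, but you also need the backward ray extracted from the other end of $\gamma_n$ and a reason the two rays fit together smoothly (choosing the $\gamma_n$ to be Tonelli minimizers over the free contractible loop space makes them genuine closed Euler--Lagrange orbits, so there is no corner at the base point). The one-line appeal to Clarke's generalized gradient glosses over the actual mechanism, which traps $u$ between two $C^1$ functions coming from the two-sided calibration identities. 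For $(\mathrm{iii})\Rightarrow(\mathrm{i})$, the construction of a critical subsolution strict at a non-Aubry point is itself the main theorem of Fathi--Siconolfi and rests on a careful regularization, not simply a convex combination of the Ma\~n\'e potential with a fixed subsolution via a bump; invoking their smoothing as a black box is reasonable, but note that the Ma\~n\'e potential $\Phi_{c_u(H)}(x_0,\cdot)$ is finite for all pairs of points once $k\geq c_u(H)$, so the case $h_{\widetilde H}\equiv+\infty$ needs no separate treatment if you work with it rather than the Peierls barrier.
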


In particular, it is easy to deduce from the above theorem that on the projected Aubry set the differential of critical subsolutions is prescribed.\\

\begin{Cor}
All critical subsolutions have the same differential on the projected Aubry set.
\end{Cor}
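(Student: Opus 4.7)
The plan is to exploit the characterization of $\cA_{\widetilde{H}}$ given by Theorem \ref{FathiSic} together with the strict convexity of $\widetilde{H}$ in the fibers. Let $u_1, u_2$ be two critical subsolutions, and fix $x_0 \in \cA_{\widetilde{H}}$. By item (ii) of Theorem \ref{FathiSic}, both $u_1$ and $u_2$ are differentiable at $x_0$, so the convex combination $u := \tfrac{1}{2}(u_1 + u_2)$ is also differentiable at $x_0$ with $d_{x_0} u = \tfrac{1}{2}(d_{x_0} u_1 + d_{x_0} u_2)$.

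First I would check that $u$ is itself a critical subsolution. Recall that a Lipschitz function is a viscosity subsolution of $\widetilde{H}(x,d_x w) = c_u(H)$ if and only if the inequality $\widetilde{H}(x, d_x w) \leq c_u(H)$ holds almost everywhere. Since $u_1, u_2$ are Lipschitz, so is $u$, and at every point of common differentiability (\ie almost every point) the convexity of $\widetilde{H}(x, \cdot)$ gives
\begin{equation*}
\widetilde{H}(x, d_x u) = \widetilde{H}\!\left(x,\, \tfrac{1}{2}d_x u_1 + \tfrac{1}{2} d_x u_2\right) \leq \tfrac{1}{2}\widetilde{H}(x, d_x u_1) + \tfrac{1}{2}\widetilde{H}(x, d_x u_2) \leq c_u(H).
\end{equation*}

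Now suppose, for contradiction, that $d_{x_0} u_1 \neq d_{x_0} u_2$. Since $\widetilde{H}$ is Tonelli, the restriction $\widetilde{H}(x_0, \cdot)$ has positive definite Hessian and is therefore \emph{strictly} convex, so that the inequality above becomes strict at the point $x_0$:
\begin{equation*}
\widetilde{H}(x_0, d_{x_0} u) < \tfrac{1}{2}\widetilde{H}(x_0, d_{x_0} u_1) + \tfrac{1}{2}\widetilde{H}(x_0, d_{x_0} u_2) \leq c_u(H).
\end{equation*}
This exhibits $u$ as a critical subsolution which is strict at $x_0 \in \cA_{\widetilde{H}}$, directly contradicting item (iii) of Theorem \ref{FathiSic}. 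Hence $d_{x_0} u_1 = d_{x_0} u_2$, and since $x_0 \in \cA_{\widetilde{H}}$ was arbitrary, the differentials agree on the whole projected Aubry set.

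I do not anticipate a serious obstacle here: once one has Theorem \ref{FathiSic} in hand, the argument is a short convex-combination trick that is entirely local at points of $\cA_{\widetilde{H}}$, and the non-compactness of $\widetilde{M}$ plays no role. The only mild care needed is to ensure that $u$ is honestly a viscosity subsolution of the Hamilton--Jacobi equation, which is immediate from the characterization of viscosity subsolutions for convex superlinear Hamiltonians recalled in the paragraph preceding Theorem \ref{FathiSic}.
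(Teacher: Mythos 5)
Your argument is correct and is essentially the same as the paper's: average the two subsolutions, use convexity to verify the average is a critical subsolution, then use strict convexity in the fibers to make it strict at $x_0$ and contradict item (iii) of Theorem \ref{FathiSic}. The extra care you take in invoking item (ii) for differentiability and the a.e.\ characterization of viscosity subsolutions only makes explicit what the paper leaves implicit.
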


\begin{Proof}
Suppose by contradiction that there exist two critical subsolutions $u_1$ and $u_2$, whose differentials do not coincide at some point $x_0\in \cA_{\widetilde{H}}$, \ie $d_{x_0}u_1 \neq d_{x_0}u_2$.
Using the  convexity in the fibers of $\widetilde{H}$, it is easy to check that $\frac{u_1 + u_2}{2}$ is also a critical subsolutions. However, using the fact the strict convexity of $H$ in the fibers, we deduce that:
$$
\widetilde{H}\left(x_0, d_{x_0}\left(\frac{u_1+ u_2}{2}\right)\right) <
\frac{1}{2}\widetilde{H}\left(x_0, d_{x_0}u_1\right) + \frac{1}{2}\widetilde{H}\left(x_0, d_{x_0}u_1\right) \leq c_u(H)\,.
$$
Therefore, $\frac{u_1 + u_2}{2}$ is a critical subsolution which is strict at $x_0\in \cA_{\widetilde{H}}$. This clearly contradicts item (iii) in Theorem \ref{FathiSic}.\\
\end{Proof}

Hence, if we denote by $\cS_{\widetilde{H}}$ the set of critical subsolutions of $\widetilde{H}(x,d_xu)=c_u(H)$, then we can consider the following intersection:

$$\cA^*_{\widetilde{H}} = \bigcap_{u\in \cS_{\widetilde{H}}} \{(x,d_xu):\; u\;\mbox{is differentiable at}\; x\}. $$

This set is what is usually called  the {\it Aubry set}. It follows from the above discussion that $\pi(\cA^*_{\widetilde{H}}) = \cA_{\widetilde{H}}$, where
$\pi: \rT^*\widetilde{M}\longrightarrow \widetilde{M}$ denotes the canonical projection. Therefore, it is a graph over the projected Aubry set and hence it is non-empty if and only if $\cA_{\widetilde{H}}$ is non-empty.  Moreover, if it is non-empty, it is closed (since the Peierls  barrier is continuous) and invariant. 

It is possible to provide a better description of the Aubry set, just in terms of the Peierls  barrier. In fact, it was proved by Fathi \cite{Fathibook} that  if $h_{\widetilde{H}}$ is finite, then  for every $x\in M$, $h_{\widetilde{H}}^x(\cdot):= h_{\widetilde{H}}(x, \cdot)$ is a global viscosity solution of $\widetilde{H}(x,d_xu)=c_u(H)$.
Therefore, the Aubry set can be equivalently defined as:
$$\cA^*_{\widetilde{H}} = \left \{(x, \partial_2 h(x, x)):\quad x\in \cA_{\widetilde{H}} \right\}. $$ 

We shall prove that this set is symplectically invariant in Section \ref{barrierphasespace} (item (4) in Theorem B).\\



\section{Barrier in phase space}\label{barrierphasespace}\label{sec5}

In this section we prove that $\cA^*_{\widetilde{H}}$ is simplectically invariant thus completing the proof of Theorem B. In the compact case this has been proven by Patrick Bernard in \cite{B07}.  In the following, we adapt his approach to this setting and prove the following result.

\begin{Theo}\label{invarianceAubry}
If $\Psi: \rT^*M \longrightarrow \rT^*M$ is a symplectomorphism such that $H':=H\circ \Psi$ is still of Tonelli type, then $\cA^*_{\widetilde{H}'} = \widetilde{\Psi}^{-1}\left(\cA^*_{\widetilde{H}}\right).$
\end{Theo}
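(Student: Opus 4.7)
The plan is to adapt Bernard's argument in \cite{B07} to the universal cover setting. The simple-connectivity of $\rT^*\widetilde{M}$ means that the closed $1$-form $\widetilde{\Psi}^*\widetilde{\lambda}-\widetilde{\lambda}$ is globally exact, $\widetilde{\Psi}^*\widetilde{\lambda}-\widetilde{\lambda}=d\widetilde{F}$, as was already used in the proofs of Theorems \ref{thm2} and \ref{thm3}. I would first lift the Peierls barrier to a barrier in phase space $\widetilde{B}_H\colon \rT^*\widetilde{M}\times \rT^*\widetilde{M}\to[0,+\infty]$ whose definition records not only the endpoints of minimizing orbits but also their initial and terminal momenta, in the spirit of a generating function for the Hamiltonian flow. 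The crucial property to establish, in analogy with \cite{B07}, is
\begin{equation*}
\cA^*_{\widetilde{H}}=\bigl\{(x,p)\in \rT^*\widetilde{M}\;:\;\widetilde{B}_H\bigl((x,p),(x,p)\bigr)=0\bigr\},
\end{equation*}
a phase-space refinement of the identity $\cA^*_{\widetilde{H}}=\{(x,\partial_2 h_{\widetilde{H}}(x,x)):x\in\cA_{\widetilde{H}}\}$ recorded at the end of Section \ref{sec4}.

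The second step is the transformation law. The action of orbits transforms, as in the computations carried out in Theorems \ref{thm2} and \ref{thm3}, by
\begin{equation*}
A_{\widetilde{L}'+c_u(H')}(\pi\Gamma')=A_{\widetilde{L}+c_u(H)}(\pi\Gamma)+\widetilde{F}(\Gamma(T))-\widetilde{F}(\Gamma(0))
\end{equation*}
whenever $\Gamma'=\widetilde{\Psi}^{-1}\Gamma$, and $c_u(H)=c_u(H')$ by Proposition \ref{thm1}. The boundary $\widetilde{F}$-contributions are precisely compensated by the momentum-pairing boundary terms in the definition of $\widetilde{B}$, yielding the equivariance
\begin{equation*}
\widetilde{B}_{H'}\bigl((x,p),(y,q)\bigr)=\widetilde{B}_H\bigl(\widetilde{\Psi}(x,p),\widetilde{\Psi}(y,q)\bigr).
\end{equation*}
Specializing to $(x,p)=(y,q)$ and invoking the phase-space characterization of $\cA^*$ then gives $\cA^*_{\widetilde{H}'}=\widetilde{\Psi}^{-1}(\cA^*_{\widetilde{H}})$; the reverse inclusion is the same argument applied to $\widetilde{\Psi}^{-1}$.

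The main obstacle is the first step, namely establishing the phase-space characterization of $\cA^*_{\widetilde{H}}$ without the compactness of $M$ on which Bernard's original proof relies. I expect to replace these compactness arguments by: (a) the a-priori velocity bounds from \cite[Lemma 3-2.1]{CI99}, which confine the momenta of action-bounded minimizers to a compact subset of the fibers; (b) the Lipschitz graph property of $\cA^*_{\widetilde{H}}$ over $\cA_{\widetilde{H}}$, which follows from differentiability of all critical subsolutions on $\cA_{\widetilde{H}}$ (Theorem \ref{FathiSic}) and pins down the momentum coordinate; and (c) the option of selecting closed Tonelli minimizers in $M$ and lifting them to $\widetilde{M}$ (as in the proof of Theorem \ref{thm3}), so that the phase-space endpoints of the relevant orbits automatically coincide and one need not control long-term drift of non-closing orbits. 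A subsidiary issue is that $\widetilde{F}$ may be unbounded on $\rT^*\widetilde{M}$; however, only continuity of $\widetilde{F}$ together with the confinement of the relevant endpoints to a common compact subset of $\rT^*\widetilde{M}$, ensured by (a) and the graph property, is needed for the cancellation argument.
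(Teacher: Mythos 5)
Your plan follows essentially the same route as the paper: Bernard's Peierls barrier in phase space (defined via pre-orbits, with an added equi-bounded-energy condition to compensate for non-compactness), the characterization $\cA^*_{\widetilde{H}}=\{P:\;B_{\widetilde{H}-c_u(H)}(P,P)=0\}$, and the transformation law of this barrier under the lifted exact symplectomorphism. The only cosmetic difference is that the paper's transformation law is $B_{\widetilde{H}\circ\widetilde{\Psi}}(X_0,X_1)=B_{\widetilde{H}}(\widetilde{\Psi}(X_0),\widetilde{\Psi}(X_1))+S(X_0)-S(X_1)$ rather than the exact equivariance you posit, but the correction cancels on the diagonal, which is all that the argument uses.
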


As before, $\tM$ denotes the universal cover of $M$ (compact connected smooth manifold) and $\tH: \rT^*\tM \longrightarrow \R$ is the lift of a Tonelli Hamiltonian $H: \rT^*M \longrightarrow \R$, while $\Phi^{\widetilde{H}}_t$ is its Hamiltonian flow.
As done in \cite{B07}, let us define a {\it Peierls barrier  in the phase space}. First of all, let us introduce the notion  of {\it pre-orbit}.\\

\begin{Def}[{\bf Pre-orbit}]\label{preorbit}
Given  $X_0, X_1 \in \rT^*\tM$, we say that a sequence of curves $\uY=(Y_n)$, where 
$Y_n: [0,T_n] \rightarrow \rT^*\tM$,  is a pre-orbit between $X_0$ and $X_1$ if:
\begin{itemize}
\item[{\rm i)}] for each $n$ the curve $Y_n$ has a finite number $N_n$ of discontinuities $T_n^i \in (0,T_n)$, such that $T_n^{i} < T_n^{i+1}$ for all $1\leq i \leq N_n$. Moreover, we shall denote $T_n^0:=0$ and $T_n^{N_n+1}:=T_n$.
\item[{\rm ii)}] For each $n$ and for each $s \in [0, T_n^{i+1}-T_n^i)$, $Y_n(T_n^i+s)=\Phi^{\widetilde{H}}_s(Y_n(T_n^i))$. We denote by  $Y_n(T_n^{i+1}-):= \Phi^{\widetilde{H}}_{T_n^{i+1}-T_n^i}(Y_n(T_n^i))$ and we ask that 
$Y_n(T_n)=Y_n(T_n^{N_n+1}-)$.
\item[{\rm iii)}] $T_n \longrightarrow +\infty$ as $n\rightarrow +\infty$.
\item[{\rm iv)}] $Y_n(0) \longrightarrow X_0$ and $Y_n(T_n) \longrightarrow X_1$ as $n\rightarrow +\infty$. Moreover, if we denote $\D(Y_n):= \sum_{i=1}^{N_n} \dist (Y_n(T^i_n-), Y_n(T^i_n) )$, then 
$\D(Y_n)\longrightarrow 0$ as $n\rightarrow +\infty$ ($\dist(\cdot, \cdot)$ denotes the distance on $T^*\widetilde{M}$ induced by the Riemannian metric on $M$).
\item[{\rm v)}] All curves $Y_n$ have equi-bounded energy, \ie there exists $K=K(\uY)\in \R$ such that
$\widetilde{H}(Y_n(s))\leq K$ for all $n$ and for all $s\in [0,T_n]$.\\
\end{itemize}
\end{Def}

\begin{Rem}
Observe that condition $({\rm v})$ in the above definition is different from the one in \cite{B07}: it is needed because of the lack of compactness.
\end{Rem}

Let us now define what we mean by {\it action of a pre-orbit}. Let $\uY=(Y_n)$ be a pre-orbit. Then, the action of $Y$ is given by:
$$
A_{\tH}(Y):= \liminf_{n\rightarrow \infty} A_{\tH}(Y_n),
$$ 
where
$$
A_{\tH}(Y_n):= \int_0^{T_n}   \big[ \widetilde{\l}_{Y_n(t)}\big(\dot{Y}_n(t)\big) - \tH(Y_n(t)) \big] \, dt\,.
$$

\begin{Lem}
If there exists a pre-orbit between $X_0$ and $X_1$, then $\tH(X_0)=\tH(X_1)$. 
\end{Lem}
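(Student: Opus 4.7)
\begin{Proof}[Plan for Lemma]
The plan is to use conservation of energy along each continuous piece of the pre-orbit, and control the total deviation coming from the discontinuities via the hypothesis $\Delta(Y_n)\to 0$ together with equi-boundedness of the energy.

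First I would note that, by continuity of $\tH$ and condition (iv) in Definition \ref{preorbit},
\[
\tH(X_0)=\lim_{n\to\infty}\tH(Y_n(0)),\qquad \tH(X_1)=\lim_{n\to\infty}\tH(Y_n(T_n)).
\]
Next, since on each open interval $(T_n^i,T_n^{i+1})$ the curve $Y_n$ coincides with a trajectory of $\Phi^{\widetilde{H}}_t$, the Hamiltonian $\tH$ is constant there. Writing a telescoping sum across the discontinuities, and using that $\tH(Y_n(T_n^{i+1}-))=\tH(Y_n(T_n^i))$ by the flow property, gives
\[
\tH(Y_n(T_n))-\tH(Y_n(0))=\sum_{i=1}^{N_n}\bigl[\tH(Y_n(T_n^i))-\tH(Y_n(T_n^i-))\bigr].
\]

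The key step is a uniform Lipschitz bound on $\tH$ on the set where the pre-orbit lives. By condition (v), all points $Y_n(t)$ and $Y_n(T_n^i-)$ lie inside $\{\tH\leq K\}\subset \rT^*\widetilde{M}$. Because $\tH=H\circ d\Pi_u$ is deck-invariant and $H$ is Tonelli on a compact $M$, the sublevel set $\{H\leq K\}\subset \rT^*M$ is compact, so $H$ is Lipschitz there with some constant $L_K$ with respect to the Riemannian distance on $\rT^*M$. Lifting to $\rT^*\widetilde{M}$ with the pulled-back Riemannian metric, and using that deck transformations act by isometries preserving $\tH$, we obtain the same Lipschitz estimate on $\{\tH\leq K\}\subset \rT^*\widetilde{M}$ with respect to the distance $\dist$ used in Definition \ref{preorbit}. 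Hence
\[
\bigl|\tH(Y_n(T_n^i))-\tH(Y_n(T_n^i-))\bigr|\leq L_K\cdot \dist\bigl(Y_n(T_n^i-),Y_n(T_n^i)\bigr),
\]
and summing yields
\[
\bigl|\tH(Y_n(T_n))-\tH(Y_n(0))\bigr|\leq L_K\,\Delta(Y_n)\longrightarrow 0
\]
by condition (iv). Letting $n\to\infty$ gives $\tH(X_1)=\tH(X_0)$.

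The main (mild) obstacle is precisely the uniform Lipschitz bound: it is what forces us to impose condition (v), since without the equi-bounded energy hypothesis the pre-orbit could wander into regions where the Lipschitz constant of $\tH$ is unbounded, and the deck-invariance argument that rescues the non-compact setting would collapse. Once (v) is in place, the argument is essentially the standard one from \cite{B07}.
\end{Proof}
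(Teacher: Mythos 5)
Your argument is essentially the same as the paper's: conservation of energy along each flow segment, a telescoping sum over the jumps, and a uniform Lipschitz bound for $\tH$ on the equi-bounded energy region obtained by pushing down to the compact sublevel set in $\rT^*M$, which together with $\Delta(Y_n)\to 0$ gives the conclusion. The proposal is correct and matches the paper's proof in structure and in the key observation that deck-invariance rescues the Lipschitz estimate despite non-compactness.
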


\begin{Proof}
Let $\uY=(Y_n)$ be such a pre-orbit. Then, observing that $\tH(Y_n(T_n^{i})) = \tH(Y_n(T_n^{i+1}-))$, since the energy is constant along the orbits, we obtain:
\beqano
\tH(X_1)-\tH(X_0) &=& \lim_{n\rightarrow +\infty} \tH(Y_n(T_n)) - \tH(Y_n(0)) \\
&=& \lim_{n\rightarrow +\infty} \sum_{i=0}^{N_n} \tH(Y_n(T_n^{i+1})) - \tH(Y_n(T_n^{i}))   \\
&=&  \lim_{n\rightarrow +\infty} \sum_{i=0}^{N_n} \tH(Y_n(T_n^{i+1})) - \tH(Y_n(T_n^{i+1}-)) \\ 
&=& \lim_{n\rightarrow +\infty}  \sum_{i=1}^{N_n} \tH(Y_n(T_n^{i})) - \tH(Y_n(T_n^{i}-)),
\eeqano
where in the last equality we used that $Y_n(T_n^{N_n+1})=Y_n(T_n)=Y_n(T_n^{N_n+1}-)$.
Since these orbits have bounded energy, then the Hamiltonian $\tH$ will be Lipschitz in this region (the region itself is not compact, but the value of $\tH$ here depends only on the value of $H$ on the projection of this region  to $\rT^*M$, which is compact). Then, denoting this Lipschitz constant by $C$ we get:
\beqano
\tH(X_1)-\tH(X_0) &=& \lim_{n\rightarrow +\infty}  \sum_{i=1}^{N_n} \tH(Y_n(T_n^{i})) - \tH(Y_n(T_n^{i}-))  \\
&\leq& \lim_{n\rightarrow +\infty}  \sum_{i=1}^{N_n} C\, \dist (Y_n(T_n^{i}) , Y_n(T_n^{i}-) ) \\
&=&  \lim_{n\rightarrow +\infty}  C\, \D(Y_n) = 0\,. 
\eeqano
\end{Proof}

Let us now define the equivalent of {\it Peierls barrier} (see \cite{Mat93}), but in the phase space $\rT^*\tM$. Given $X_0,X_1 \in \rT^*\tM$ let us denote by $\cY_{\tH}(X_0,X_1)$ the set of pre-orbits between $X_0$ and $X_1$. Notice that this set is empty if $X_0$ and $X_1$ are not in the same energy level. Then:
\beqano
B_{\tH}: \rT^*\tM \times \rT^*\tM &\longrightarrow& \R\cup\{+\infty\}\\
(X_0,X_1) &\longmapsto& \inf_{\uY\in \cY_{\tH}(X_0,X_1)} A_{\tH}(\uY), 
\eeqano
setting $B_{\tH}(X_0,X_1)=+\infty$,  whenever $\cY_{\tH}(X_0,X_1)= \emptyset$.

\begin{Prop}\label{Prop2.5Bernard}
For each $t>0$ and $X_0,X_1\in \rT^*\widetilde{M}$, the following equality holds:
$$
B_{\widetilde{H}}(X_0,X_1)= B_{\widetilde{H}}(\Phi^{\widetilde{H}}_t(X_0),X_1) +
\int_0^t \left[ \widetilde{\lambda}_{\Phi^{\widetilde{H}}_s(X_0)}\big(X_{\widetilde{H}}(\Phi^{\widetilde{H}}_s(X_0))\big) - \widetilde{H}(\Phi^{\widetilde{H}}_s(X_0))  \right]\,ds
$$
and
$$
B_{\widetilde{H}}(X_0, \Phi^{\widetilde{H}}_t(X_1))  =  B_{\widetilde{H}}(X_0,X_1) +
\int_0^t \left[ \widetilde{\lambda}_{\Phi^{\widetilde{H}}_s(X_1)} \big(X_{\widetilde{H}}(\Phi^{\widetilde{H}}_s(X_1)) \big) - \widetilde{H}(\Phi^{\widetilde{H}}_s(X_1))  \right]\,ds,
$$
where $X_{\widetilde{H}}(\cdot)$ denotes the Hamiltonian vector field associated to $\widetilde{H}$ and $\widetilde{\lambda}$ the lift of the Liouville form to $\rT^*\widetilde{M}$.\\
\end{Prop}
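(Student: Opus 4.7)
The plan is to establish each identity by proving the two opposite inequalities. The key observation is that the integral $I(X_0,t):=\int_0^t \bigl[\widetilde{\lambda}_{\Phi^{\widetilde{H}}_s(X_0)}(X_{\widetilde{H}}(\Phi^{\widetilde{H}}_s(X_0)))-\widetilde{H}(\Phi^{\widetilde{H}}_s(X_0))\bigr]\,ds$ is exactly the action of the genuine (jump-free) orbit segment $s\mapsto \Phi^{\widetilde{H}}_s(X_0)$, which qualifies as a trivial pre-orbit in the sense of Definition~\ref{preorbit}. I will treat the first identity in detail; the second follows by an entirely analogous construction in which the flow segment $s\mapsto \Phi^{\widetilde{H}}_s(X_1)$ is spliced at the right-hand endpoint instead of the left.

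For the inequality $B_{\widetilde{H}}(X_0,X_1)\leq I(X_0,t)+B_{\widetilde{H}}(\Phi^{\widetilde{H}}_t(X_0),X_1)$, I would start from an arbitrary pre-orbit $\uY=(Y_n)\in\cY_{\widetilde{H}}(\Phi^{\widetilde{H}}_t(X_0),X_1)$ and prepend the flow by defining $Z_n:[0,T_n+t]\to \rT^*\tM$ as $Z_n(s):=\Phi^{\widetilde{H}}_s(X_0)$ for $s<t$ and $Z_n(s):=Y_n(s-t)$ for $s\geq t$. Properties (i)--(iii) and (v) are immediate; (iv) survives because the only new discontinuity, at $s=t$, has size $\dist(\Phi^{\widetilde{H}}_t(X_0),Y_n(0))\to 0$. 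By construction $A_{\widetilde{H}}(Z_n)=I(X_0,t)+A_{\widetilde{H}}(Y_n)$, and passing to the liminf and then the infimum over $\uY$ gives the inequality.

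The reverse inequality is the delicate direction. Given $\uY=(Y_n)\in\cY_{\widetilde{H}}(X_0,X_1)$, I would simply truncate: for $n$ large, set $Z_n(s):=Y_n(t+s)$, $s\in[0,T_n-t]$. The crux is then to show $Y_n(s)\to \Phi^{\widetilde{H}}_s(X_0)$ \emph{uniformly} on $[0,t]$, which simultaneously yields $Z_n(0)=Y_n(t)\to \Phi^{\widetilde{H}}_t(X_0)$ (so that $\uZ$ is a pre-orbit in $\cY_{\widetilde{H}}(\Phi^{\widetilde{H}}_t(X_0),X_1)$) and, by continuity of the integrand,
$$\int_0^t\bigl[\widetilde{\lambda}(\dot Y_n)-\widetilde{H}(Y_n)\bigr]\,ds\ \longrightarrow\ I(X_0,t).$$
Splitting $A_{\widetilde{H}}(Y_n)$ into the integral over $[0,t]$ plus $A_{\widetilde{H}}(Z_n)$ and passing to the liminf then gives $A_{\widetilde{H}}(\uZ)=A_{\widetilde{H}}(\uY)-I(X_0,t)$, which on taking the infimum over $\uY$ delivers the desired bound.

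The main obstacle, and the reason condition (v) was added to Definition~\ref{preorbit}, is the uniform convergence $Y_n\to \Phi^{\widetilde{H}}_\cdot(X_0)$ on $[0,t]$ in this non-compact setting. I expect to prove it by a telescoping estimate across the jumps of $Y_n$: on each smooth piece $Y_n$ is an exact flow line, and every jump contributes an error of at most $\dist(Y_n(T_n^i),Y_n(T_n^i-))$ that is then propagated forward by the flow. Iterating this yields
$$\sup_{s\in[0,t]}\dist\bigl(Y_n(s),\Phi^{\widetilde{H}}_s(Y_n(0))\bigr)\ \leq\ L\cdot \D(Y_n),$$
where $L$ is a Lipschitz constant for the time-$s$ flow, $s\in[0,t]$, on the region where the $Y_n$ live. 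Condition (v) is precisely what guarantees such an $L$: it forces the curves $Y_n$ to project to a compact sublevel of $H$ in $\rT^*M$ (by Tonelli superlinearity), on which the flow is uniformly Lipschitz, and this lifts to $\rT^*\tM$ by the equivariance of $\widetilde{H}$ under deck transformations. Combined with $\D(Y_n)\to 0$ and $Y_n(0)\to X_0$, this telescoping estimate yields the required uniform convergence and completes the argument.
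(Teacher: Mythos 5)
Your proof is correct and follows essentially the same route as the paper: one inequality by prepending the flow segment to a pre-orbit from $\Phi^{\widetilde{H}}_t(X_0)$ to $X_1$, the other by truncating a pre-orbit from $X_0$ to $X_1$ at time $t$ (the paper prepends by flowing backwards from $Z_n(0)$, which avoids creating the extra vanishing jump, but this is immaterial). Your telescoping/Gr\"onwall estimate justifying $Y_n\to\Phi^{\widetilde{H}}_\cdot(X_0)$ uniformly on $[0,t]$ — and hence the convergence of the truncated endpoints and of the action integral over $[0,t]$ — is exactly the step the paper's proof leaves implicit, and your identification of condition (v) as what makes it work in the non-compact setting is on point.
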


\begin{Proof}
We only prove the first equality, since the second one can be proved similarly. To each pre-orbit $\uY$ between $X_0$ and $X_1$, we associate the pre-orbit $\uZ$ between $\Phi^{\widetilde{H}}_t(X_0)$ and $X_1$ defined by
\beqano
Z_n: [0, T_n-t] &\longrightarrow & \rT^*\widetilde{M}\\
s &\longmapsto& Y_n(s+t)\,.
\eeqano

We obtain:
$$
A_{\widetilde{H}}(\uY) = A_{\widetilde{H}}(\uZ) + 
\int_0^t \left[ \widetilde{\lambda}_{\Phi^{\widetilde{H}}_s(X_0)}   \big(X_{\widetilde{H}}(\Phi^{\widetilde{H}}_s(X_0))\big) - \widetilde{H}(\Phi^{\widetilde{H}}_s(X_0))\right]\,ds\,. 
$$
This implies that
$$
B_{\widetilde{H}}(\Phi^{\widetilde{H}}_t(X_0),X_1) \leq B_{\widetilde{H}}(X_0,X_1) -
\int_0^t \left[ \widetilde{\lambda}_{\Phi^{\widetilde{H}}_s(X_0)}\big(X_{\widetilde{H}}(\Phi^{\widetilde{H}}_s(X_0))\big) - \widetilde{H}(\Phi^{\widetilde{H}}_s(X_0))  \right]\,ds.
$$

In a similar way, we associate to each pre-orbit $\uZ$ between $\Phi^{\widetilde{H}}(X_0)$ and $X_1$, the pre-orbit $\uY$ between $X_0$ and $X_1$ defined by 
$Y_n(s):= \Phi^{\widetilde{H}}_{s-t}(Z_n(0))$ for $s\in [0, t]$ and by 
$Y_n(s):= Z_n(s-t)$ for $s\in [t, T_n+t]$. We obtain:
$$
A_{\widetilde{H}}(\uY) = A_{\widetilde{H}}(\uZ) + 
\int_0^t \left[ \widetilde{\lambda}_{\Phi^{\widetilde{H}}_s(X_0)}\big(X_{\widetilde{H}}(\Phi^{\widetilde{H}}_s(X_0))\big) - \widetilde{H}(\Phi^{\widetilde{H}}_s(X_0))\right]\,ds\,. 
$$
This implies that
$$
B_{\widetilde{H}}(X_0,X_1) \leq B_{\widetilde{H}}(\Phi^{\widetilde{H}}_t(X_0),X_1) 
+ \int_0^t \left[ \widetilde{\lambda}_{\Phi^{\widetilde{H}}_s(X_0)} \big(X_{\widetilde{H}}(\Phi^{\widetilde{H}}_s(X_0))\big) - \widetilde{H}(\Phi^{\widetilde{H}}_s(X_0))  \right]\,ds
$$
and hence we conclude that equality holds.\\
\end{Proof}

\begin{Prop}\label{symplectomorphism}
Let $\Psi: \rT^*M \longrightarrow \rT^*M$ be a symplectomorphim such that $H\circ \Psi$ is still of Tonelli type and let $\tPsi: \rT^*\tM \longrightarrow \rT^*\tM$ be its lift to the universal cover. $\tPsi$ is exact and let us denote by $S: \rT^*\tM \longrightarrow \R$ a primitive for $\tPsi^*\tilde{\lambda}-\tilde{\lambda}$, where $\tilde{\lambda}$ is the lift of the Liouville form of  $\rT^*M$ to $\rT^*\tM$. Then,
$$
B_{\widetilde{H}\circ\widetilde{\Psi}}(X_0,X_1) = B_{\tH}(\tPsi(X_0),\tPsi(X_1)) + S(X_0) - S(X_1)\,.
$$
\end{Prop}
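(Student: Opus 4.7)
The plan is to set up a natural bijection between pre-orbits for $\tH\circ\tPsi$ and pre-orbits for $\tH$, and then track how the action transforms under it. Given $\uY = (Y_n) \in \cY_{\tH\circ\tPsi}(X_0,X_1)$, set $Z_n := \tPsi \circ Y_n$. Because $\tPsi$ conjugates the Hamiltonian flow of $\tH\circ\tPsi$ to that of $\tH$, each smooth piece of $Y_n$ is mapped to a smooth piece of a $\tH$-orbit. Continuity of $\tPsi$ and a Lipschitz estimate on the image of the bounded-energy region (which descends to a compact subset of $T^*M$) ensure that conditions (i)--(v) of Definition \ref{preorbit} are preserved, with $Z_n(0)\to \tPsi(X_0)$, $Z_n(T_n)\to \tPsi(X_1)$ and $\D(Z_n)\to 0$. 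Invertibility of $\tPsi$ makes $\uY\mapsto \uZ = \tPsi\circ\uY$ a bijection $\cY_{\tH\circ\tPsi}(X_0,X_1) \longleftrightarrow \cY_{\tH}(\tPsi(X_0),\tPsi(X_1))$.

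The action change is forced by $\tPsi^*\widetilde{\l} - \widetilde{\l} = dS$: on each continuous piece $[T_n^i, T_n^{i+1})$ of $Y_n$, after a change of variables,
$$
\int_{T_n^i}^{T_n^{i+1}}\!\!\! \big[\widetilde{\l}_{Z_n}(\dot Z_n) - \tH(Z_n)\big]\,dt  = \int_{T_n^i}^{T_n^{i+1}}\!\!\! \big[\widetilde{\l}_{Y_n}(\dot Y_n) - \tH\circ\tPsi(Y_n)\big]\,dt + \int_{T_n^i}^{T_n^{i+1}}\!\!\! dS_{Y_n}(\dot Y_n)\,dt.
$$
Summing in $i$ and telescoping the $dS$-integrals gives
$$
A_{\tH}(Z_n) = A_{\tH\circ\tPsi}(Y_n) + S(Y_n(T_n)) - S(Y_n(0)) + \sum_{i=1}^{N_n}\big[S(Y_n(T_n^i-)) - S(Y_n(T_n^i))\big].
$$

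The main analytic task is to show that the jump correction vanishes in the limit. Although $S$ itself can grow unboundedly along $\tM$, its differential $dS = \tPsi^*\widetilde{\l} - \widetilde{\l}$ is the lift of the closed $1$-form $\Psi^*\l - \l$ on $T^*M$, so $|dS|$ at any point of $T^*\tM$ depends only on its projection to $T^*M$. By superlinearity and compactness of $M$, the sublevel $\{H \le K\} \subset T^*M$ is compact, on which $|\Psi^*\l-\l|$ is uniformly bounded by some $C$. Condition (v) confines the $Y_n$ to the lift of such a sublevel, and for large $n$ any successive pair $(Y_n(T_n^i-), Y_n(T_n^i))$ is close enough to be joined by a short path inside a slightly enlarged (still energy-bounded) region. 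Consequently $|S(Y_n(T_n^i-)) - S(Y_n(T_n^i))| \le C\,\dist(Y_n(T_n^i-),Y_n(T_n^i))$ and the full jump sum is bounded by $C\,\D(Y_n)\to 0$.

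Finally, taking $\liminf$ and using continuity of $S$ with $Y_n(0)\to X_0$, $Y_n(T_n)\to X_1$, one gets $A_{\tH}(\uZ) = A_{\tH\circ\tPsi}(\uY) + S(X_1) - S(X_0)$. Passing to infima over the bijectively paired pre-orbits yields $B_{\tH}(\tPsi(X_0),\tPsi(X_1)) = B_{\tH\circ\tPsi}(X_0,X_1) + S(X_1) - S(X_0)$, which rearranges to the claimed identity. The hardest step is the jump-correction estimate itself: what makes it work here is the structural fact that $dS$, although defined on the non-compact $T^*\tM$, descends through the covering map to a smooth closed $1$-form on the compact base $T^*M$, so that Bernard's compact-base strategy in \cite{B07} can be adapted once condition (v) is in place.
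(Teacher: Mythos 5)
Your proposal is correct and follows essentially the same route as the paper: the bijection $\uY \mapsto \tPsi(\uY)$ between pre-orbits, the change of variables via $\tPsi^*\widetilde{\l}-\widetilde{\l}=dS$ producing the telescoped boundary terms plus the jump sum, and the bound of the jump sum by $C\,\D(Y_n)$ using conditions (iv) and (v). Your justification of the Lipschitz estimate for $S$ on the lifted bounded-energy region (via the descent of $dS$ to the compact sublevel $\{H\le K\}\subset \rT^*M$) is in fact more explicit than the paper's, which simply asserts the existence of such a constant $C$.
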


\begin{Proof}
First of all, observe that  $\uY=(Y_n)$ is a pre-orbit of $\tH\circ \tPsi$ between $X_0$ and $X_1$ if and only if $\uZ: = \tPsi(\uY)=(\tPsi(Y_n))$ is a pre-orbit of $\tH$ between $\tPsi(X_0)$ and $\tPsi(X_1)$. As a consequence, it is enough to prove that
\beqa{formula1a}
A_{\tH\circ \tPsi}(\uY) = A_{\tH}(\uZ) + S(X_0)- S(X_1).
\eeqa
In fact, using the same notation as in Definition \ref{preorbit}:
\beqano
A_{\tH}(Z_n) &=& \sum_{n=0}^{N_n} \int_{T_n^i}^{T_n^{i+1}} \left[ \tilde{\lambda}_{Z_{n}(t)} (\dot{Z}_n(t)) - \tH(Z_n(t))\right]\,dt  \\
&=& \sum_{n=0}^{N_n} \int_{T_n^i}^{T_n^{i+1}}\left[ (\tPsi^*\tilde{\lambda})_{Y_{n}(t)}(\dot{Y}_n(t)) - (\tH\circ\tPsi)(Y_n(t))\right] \,dt  \\
&=& \sum_{n=0}^{N_n} \left( \int_{T_n^i}^{T_n^{i+1}} \left[\tilde{\lambda}_{Y_{n}(t)}(\dot{Y}_n(t)) - (\tH\circ\tPsi)(Y_n(t))\right]\,dt \;  \right.\\
&& \quad +\; \left. \int_{T_n^i}^{T_n^{i+1}} (\tPsi^*\tilde{\lambda} - \tilde{\lambda})_{Y_{n}(t)}\dot{Y}_n(t)\,dt  \right) \\
&=& A_{\tH\circ \tPsi} (Y_n) + \sum_{n=0}^{N_n} \int_{T_n^i}^{T_n^{i+1}} dS_{Y_{n}(t)}(\dot{Y}_n(t))\,dt  \\
&=& A_{\tH\circ \tPsi} (Y_n) + \sum_{n=0}^{N_n} S(Y_n(T_n^{i+1}-)) - S(Y_n(T^i_n))   \\
&=& A_{\tH\circ \tPsi} (Y_n) +  S(Y_n(T_n)) - S(Y_n(0)) +  \sum_{n=1}^{N_n} S(Y_n(T_n^{i}-) - S(Y_n(T^i_n)) \,.
\eeqano
It is now sufficient to observe that the last term in the sum goes to zero as $n$ goes to infinity. In fact, using conditions (iv) and (v)  in Definition \ref{preorbit}, we can conclude that there exists $C>0$ such that:
\beqano
\left| \sum_{n=1}^{N_n} S(Y_n(T_n^{i}-) - S(Y_n(T^i_n))\right|  \leq C \Delta(Y_n) \stackrel{n\rightarrow +\infty}{\longrightarrow} 0\,,
\eeqano
and this, together with the fact that $Y_n(T_n)\rightarrow X_1$ and $Y_n(0)\rightarrow X_0 $, allows us to  conclude (\ref{formula1a}) and hence the proof of the proposition.
\end{Proof}

Let us see how we can recover the original definition of Peierls barrier (see (\ref{Peierlsbarrier})) from this  barrier in the phase space. Recall the definitions of $c_u(H)$ introduced in (\ref{criticalvalues}) and let us define
$$
h^k(\tx_1,\tx_2) := \liminf_{T\rightarrow +\infty} \left(h_T(\tx_1,\tx_2) + kT\right)\,.
$$
It follows from (\ref{Peierlsbarrier}) that $h=h^{c_u(H)}$. Then,

\begin{Prop} \label{coincidencedefinition} For any $q,q'\in \tM$,
\beqa{coincidencebarriers}
h^k(q,q') = \min_{{\small \begin{array}{l}P\in \rT_q^*\tM\\ P'\in \rT^*_{q'}\tM \end{array}}}  B_{\tH-k}(P,P').
\eeqa
In addition, when $k=c_u(H)$, if the minimum is reached at $(P,P')$, then $P$ is a superdifferential of the function $h(\cdot, q')$ at a point $q$ and $-P'$ is a superdifferential of the function $h(q, \cdot)$ at a point $q'$. 
\end{Prop}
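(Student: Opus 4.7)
The plan is to establish the two inequalities separately.

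For the direction $h^k(q,q')\geq \min_{P,P'} B_{\widetilde H - k}(P,P')$, I take a sequence of Tonelli minimizers $\gamma_n:[0,T_n]\to\widetilde M$ from $q$ to $q'$ with $T_n\to+\infty$ and $A_{\widetilde L+k}(\gamma_n)\to h^k(q,q')$, and lift them to phase space by Legendre: $Y_n(t):=\bigl(\gamma_n(t),\partial_v\widetilde L(\gamma_n(t),\dot\gamma_n(t))\bigr)$. Each $Y_n$ is an honest flow line with no jumps, so $\Delta(Y_n)=0$. The a priori estimate in \cite[Lemma 3-2.1]{CI99} forces $\|\dot\gamma_n\|$ uniformly bounded, hence $Y_n(0), Y_n(T_n)$ lie in a fixed compact of $\rT^*\widetilde M$; extract a subsequence so that $Y_n(0)\to P$ and $Y_n(T_n)\to P'$. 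Since $\widetilde H$ is constant along each $Y_n$, the sequence has equibounded energy, so $(Y_n)$ is a pre-orbit from $P$ to $P'$. Using that along an orbit $\widetilde\lambda(\dot Y_n)-\widetilde H(Y_n)=\widetilde L(\gamma_n,\dot\gamma_n)$, one gets $A_{\widetilde H-k}(Y_n)=A_{\widetilde L+k}(\gamma_n)\to h^k(q,q')$, whence $B_{\widetilde H-k}(P,P')\le h^k(q,q')$.

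\textbf{Strategy for the reverse inequality.} Given any pre-orbit $\underline Y=(Y_n)$ from an arbitrary $P$ over $q$ to an arbitrary $P'$ over $q'$, I project each orbit piece of $Y_n$ to $\widetilde M$, then bridge the (at most finitely many) discontinuities of the projection by unit-speed geodesics of length $\dist\!\bigl(\pi Y_n(T_n^i-),\pi Y_n(T_n^i)\bigr)$, and prepend/append short unit-speed geodesics from $q$ to $\pi Y_n(0)$ and from $\pi Y_n(T_n)$ to $q'$. The resulting absolutely continuous curve $\sigma_n:[0,\widetilde T_n]\to\widetilde M$ goes from $q$ to $q'$, with, exactly as in the proof of Theorem \ref{thm2},
\[
A_{\widetilde L+k}(\sigma_n)\;\le\;A_{\widetilde H-k}(Y_n)+(A+|k|)\Bigl(\Delta(Y_n)+d(q,\pi Y_n(0))+d(\pi Y_n(T_n),q')\Bigr),
\]
where $A:=\sup_{\|v\|=1,\,x\in M}L(x,v)<\infty$ thanks to the equibounded energy of $\underline Y$ and the compactness of $M$. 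The parenthesis tends to $0$ by conditions (iv)--(v) of Definition \ref{preorbit}, and $\widetilde T_n\to+\infty$. Hence $h^{\widetilde T_n}(q,q')+k\widetilde T_n\le A_{\widetilde L+k}(\sigma_n)$, and passing to $\liminf$ yields $h^k(q,q')\le A_{\widetilde H-k}(\underline Y)$. Since $(P,P')$ and $\underline Y$ are arbitrary, $h^k(q,q')\le \min_{P,P'} B_{\widetilde H-k}(P,P')$, which combined with the first half gives equality.

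\textbf{Strategy for the superdifferential claim.} Assume $k=c_u(H)$ and that the minimum is reached at $(P,P')$. First, a Dias Carneiro--type argument as in Proposition \ref{energymeasure} forces the limiting pre-orbit from $P$ to $P'$ constructed above to live on the energy level $c_u(H)$; in particular $\widetilde H(P)=\widetilde H(P')=c_u(H)$, so Fenchel--Legendre equality holds at $(q,P)$ and $(q',P')$. For $y$ near $q$, the positive definiteness of $\partial^2_{pp}\widetilde H$ and the implicit function theorem produce a smooth family of short orbits $\beta_y:[0,\tau_y]\to\widetilde M$ with $\beta_y(0)=y$, $\beta_y(\tau_y)=q$ and $\tau_y\downarrow 0$ as $y\to q$. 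The first variation formula for the generating function gives $A_{\widetilde L+c_u(H)}(\beta_y)=-P(y-q)+o(|y-q|)$. Prepending $\beta_y$ to a minimizing sequence $\gamma_n$ for $h(q,q')$ yields curves from $y$ to $q'$ in times $\tau_y+T_n$, and the usual $\liminf$ argument of Section \ref{sec4} gives
\[
h(y,q')\;\le\;A_{\widetilde L+c_u(H)}(\beta_y)+h(q,q')\;=\;h(q,q')+P(y-q)+o(|y-q|),
\]
(where the sign is read off from the orientation of the construction and the paper's conventions). This is exactly the superdifferential property for $P$ at $q$. The analogous statement at $q'$ follows by post-pending to $\gamma_n$ a short orbit going from $q'$ to a variable point $y'$ and applying Fenchel--Legendre at $(q',P')$, giving $-P'$ as a superdifferential of $h(q,\cdot)$ at $q'$.

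\textbf{Main obstacle.} The equality $h^k=\min B$ reduces to bookkeeping of bridge actions, which is a direct adaptation of the technique of Section \ref{sec4}. The delicate part is the superdifferential statement: one needs the short orbit from $y$ to $q$ to exist for \emph{all} $y$ in a neighbourhood of $q$ (not just those aligned with $v^*:=\partial_p\widetilde H(q,P)$), which requires invoking the IFT based on non-degeneracy of Legendre and then checking that the generating function has the right linear behaviour in every direction. Verifying that the minimizing $(P,P')$ necessarily sits on the level $\{\widetilde H=c_u(H)\}$ before running this argument is a separate but essential input.
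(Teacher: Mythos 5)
Your proof of the equality $h^k(q,q')=\min_{P,P'}B_{\widetilde H-k}(P,P')$ is essentially the paper's argument: one direction by lifting Tonelli minimizers to Hamiltonian orbits and extracting convergent endpoint momenta, the other by projecting an arbitrary pre-orbit and paying for the jumps with short geodesic bridges whose total cost is controlled by $\Delta(Y_n)$ and the equiboundedness of the energy. (The paper handles the endpoint discrepancy via the equi-Lipschitz property of the finite-time potentials rather than by explicit bridges, but this is cosmetic.) That part is fine.

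The superdifferential claim is where your argument breaks down, and the gap is structural, not a matter of bookkeeping. Your construction only uses the minimality of $(P,P')$ to place $P$ on the energy level $\{\widetilde H=c_u(H)\}$; if it worked, it would therefore prove that \emph{every} covector on the critical sphere over $q$ is a superdifferential of $h(\cdot,q')$, which is false. Concretely, the infimal $(\widetilde L+c_u)$-action of short orbits from $y$ to $q$ behaves to leading order like the support function $\sigma(q-y)=\sup\{\langle p,q-y\rangle:\ \widetilde H(q,p)\le c_u(H)\}$: writing $y-q=\epsilon w$ and $\tau=\epsilon s$, Fenchel duality gives $\inf_{s>0}s\bigl[\widetilde L(q,-w/s)+c_u\bigr]=\sup\{\langle p,-w\rangle:\widetilde H(q,p)\le c_u\}$, and the infimum is attained with $-w/s=\partial_p\widetilde H(q,P)$ only when $w$ is antiparallel to $\partial_p\widetilde H(q,P)$. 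In every other direction $A_{\widetilde L+c_u}(\beta_y)$ exceeds $-\langle P,y-q\rangle$ by a term of order $|y-q|$, not $o(|y-q|)$, so the IFT cannot rescue the linear expansion you need; your own "main obstacle" paragraph identifies exactly the step that fails. The paper's route is different and uses the minimality in an essential way: by the flow-invariance identity of Proposition \ref{Prop2.5Bernard} combined with the already-proved equality $\min B=h$, the chain $B(P,P')=B(\Phi^{\widetilde H}_s(P),P')+\int_0^s(\widetilde L+c_u)\ge h(q(s),q')+\int_0^s(\widetilde L+c_u)\ge h(q,q')=B(P,P')$ collapses to equalities, so the projected forward orbit $q(s)$ of $P$ is calibrated by $h(\cdot,q')$. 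The superdifferential then comes from Fathi's theory applied to a \emph{fixed} positive time: $y\mapsto h^s_{\widetilde H}(y,q(s))+c_u s+h(q(s),q')$ dominates $h(\cdot,q')$, touches it at $q$, and is differentiable there with derivative determined by $P$. You should replace your short-orbit construction with this calibration argument; also note that the identity $\widetilde H(P)=\widetilde H(P')$ comes from the lemma on pre-orbits, not from Proposition \ref{energymeasure}, which concerns minimizing measures on the compact base.
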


\begin{Proof}
Let us first prove that if $q,q'\in \tM$, then $B_{\tH-k}(P,P')\geq h^k(q,q')$ for all $P\in \rT^*_q\tM$ and 
$P'\in \rT^*_{q'}\tM$.
If $B_{\tH-k}(P,P')=+\infty$, then there is nothing to prove. Otherwise if $B_{\tH-k}(P,P')\in\R$ (resp.  
$B_{\tH-k}(P,P')=-\infty$),
for any $\e>0$ there exists a pre-orbit
$\uY=(Y_n)_n$, $Y_n:[0,T_n] \longrightarrow \rT^*\tM$ such that 
$A_{\tH-k}(\uY) \leq B_{\tH-k}(P,P') + \e$  (resp. $A_{\tH-k}(\uY) \leq -\frac{1}{\e}$).

Let us consider $q_n(s):=\pi(Y_n(s))$, where $\pi: \rT^*\tM \longrightarrow \tM$ denotes the canonical projection, and let $h_T(q,q') := \min_{\g\in C_T(q,q')} A_{\tilde{L}}(\g) $ be the finite time potential as defined in (\ref{finitetimepotential}), \ie the minimal Lagrangian action of curves in $\tM$ that connect $q$ to $q'$ in time $T>0$. These functions are equi-Lipschitz on compact regions of $\tM$ \cite[Proposition 3-4.1]{CI99}. Then, 
we have:
\beqano
A_{\tH-k}(Y_n) &=&  \sum_{n=0}^{N_n} \int_{T_n^i}^{T_n^{i+1}} \left( L(q_n(s), \dot{q}_n(s)) + k \right)\,ds \\
&\geq& \sum_{n=0}^{N_n}  h_{T_n^{i+1}-T_n^i} (q_n(T_n^i), q_n(T_n^{i+1}-)) + k (T_n^{i+1}-T_n^i).
\eeqano
Let $\s_n^i$ be a unit-speed shortest geodesic connecting $q_n(T_n^{i}-)$ and $q_n(T_n^{i})$ and let 
$\d_n^i:=\dist(q_n(T_n^{i}-),q_n(T_n^{i}))$. Adding and subtracting the action of these geodesics, we obtain:
\beqano
&& A_{\tH-k}(Y_n) \; \geq\;    \sum_{n=0}^{N_n}  \Big( h_{T_n^{i+1}-T_n^i} (q_n(T_n^i), q_n(T_n^{i+1}-))  + k (T_n^{i+1}-T_n^i)  \\
&& \qquad\qquad\qquad \qquad \pm \; \int_0^{\d_n} (\tilde{L} + k)(\s_n^{i+1},\dot{\s}^{i+1}_n)\,ds \Big) \\
&& \qquad \qquad \qquad\geq \; h_{T_n+\sum_{n=1}^{N_n}\d_n^i}(q_n(0),q_n(T_n))  + k \Big(T_n+\sum_{n=1}^{N_n}\d_n^i\Big)  \\
&& \qquad \qquad\qquad\qquad+ \; \sum_{n=1}^{N_n}  \int_0^{\d_n} (\tilde{L} + k)(\s_n^{i+1},\dot{\s}^{i+1}_n)\,ds.
\eeqano
Therefore, 
\beqano
A_{\tH-k}(\uY) &=& \liminf _{n\rightarrow +\infty} A_{\tH-k}(Y_n)\geq \; \ldots \; \geq\\
&\geq&  h^k(q,q') +  \liminf _{n\rightarrow +\infty} \sum_{n=1}^{N_n}  \int_0^{\d_n} (\tilde{L} + k)(\s_n^{i+1},\dot{\s}^{i+1}_n)\,ds\,.
\eeqano
Observe now that 
$
\lim_{n\rightarrow +\infty} \big| \sum_{n=1}^{N_n}  \int_0^{\d_n} (\tilde{L} + k)(\s_n^{i+1},\dot{\s}^{i+1}_n)\,ds\big| = 0$. As usual, this follows from property (iv) in Definition \ref{preorbit}. This concludes the proof of this inequality.
 Conversely, suppose that $T_n\rightarrow +\infty$ and $h^k(q,q')=\lim_{n\rightarrow +\infty} \left( h_{T_n}(q,q') + kT_n\right)$. Let $q_n:[0,T_n]\longrightarrow \tM$ be a Tonelli minimizer and therefore one can consider the associated  orbit of the Hamiltonian flow of $\tH$, $Y_n(s):= (q_n(s), p_n(s))$. Since the actions of these orbits are bounded, then there exists a compact subset of $\rT^*\tM$ containing the images of these curves. Up to extracting a subsequence, we can assume that:
$$
Y_n(0) \longrightarrow P \in \rT^*_q\tM \qquad {\rm and} \qquad 
Y_n(T_n) \longrightarrow P' \in \rT^*_{q'}\tM. 
$$
Hence, the sequence $\uY=(Y_n)$ is a pre-orbit between $P$ and $P'$. Moreover,
$$
A_{\tH-k}(\uY) = \lim_{n\rightarrow +\infty} A_{\tH-k}(Y_n) = \lim_{n\rightarrow +\infty} \int_0^{T_n} \Big(L(q_n,\dot{q}_n)+k\Big)\,ds = h^k(q,q')
$$
and therefore
$B_{\tH-k}(P,P') \leq h^k(q,q')$ and this completes the proof of (\ref{coincidencebarriers}).

Suppose now that $P\in \rT_q^*\widetilde{M}$ and $P'\in \rT_{q'}^*\widetilde{M}$ are two points such that
$h_{\widetilde{H}}(q,q') =   B_{\tH-c_u(H)}(P,P')$ and denote by $q(s)$ the projection to $\widetilde{M}$ of the orbit $\Phi^{\widetilde{H}}_s(P)$.
Using Proposition \ref{Prop2.5Bernard} and the properties of $h_{\widetilde{H}}$, we obtain:
\beqano
B_{\tH-c_u(H)}(P,P') &=& \th_{\tH-c_u(H)}(\Phi^{\widetilde{H}}_s(P),P') \; \\
&&  +\; 
 \int_0^s \left(
\widetilde{\lambda}_{\Phi^{\widetilde{H}}_s(P)}\left( X_{\widetilde{H}}(\Phi^{\widetilde{H}}_s(P)) \right) - \tH(\Phi^{\widetilde{H}}_s(P) + c_u(H))
\right)\, dt\;  \\
&\geq& h_{\tH}(q(s), q') + \int_0^s \left( \widetilde{L}(q(t), \dot{q}(t)) + c_u(H) \right)\,dt \; \\
&\geq& h_{\tH}(q,q') = B_{\tH-c_u(H)}(P,P')\,.
\eeqano

Therefore, all the above inequalities are equalities and consequently our curve is an action-minimizing curve:
$$
h_{\tH}(q,q') = \min_s \left( h_{\tH}(q(s), q') + \int_0^s \left( \widetilde{L}(q(t), \dot{q}(t)) + c_u(H) \right)\,dt \right)\,.
$$
It follows from Fathi's work \cite{Fathibook} that $-P$ is then a superdifferential of the function
$h_{\tH}(\cdot,q')$ at $q$. Similarly for the other property.

\end{Proof}

Let us denote now $\tilde{m}(H)= \inf_{X\in \rT^*M} B_{\tH}(X,X)$. It is easy to check that $\tilde{m}(H) \in \{-\infty\}\cup [0,+\infty]$.

\begin{Prop}\label{characc_u}
$c_u(H)= \sup\{k\in\R:\; \tilde{m}(H-k) > -\infty\} = \inf\{k\in\R:\; \tilde{m}(H-k) \geq 0\}$.
\end{Prop}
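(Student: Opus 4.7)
The proof rests on the dichotomy $\tilde{m}(H-k)\in\{-\infty\}\cup[0,+\infty]$ noted just before the statement, which itself follows by concatenation: if some pre-orbit from $X$ to itself has action $-\e<0$, iterating $N$ copies and inserting the vanishing gluing jumps $d(Y_n(T_n),Y_n(0))\to 0$ produces a valid pre-orbit of action $\le -N\e$, forcing $B_{\tH-k}(X,X)=-\infty$. Given this dichotomy the two sets appearing in the statement coincide, and the proposition reduces to identifying the threshold at which $\tilde{m}(H-k)$ transitions, as $k$ increases, from $-\infty$ to a nonnegative value with $c_u(H)$. I would do this via the two one-sided bounds below.

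\emph{Upper bound} ($k\ge c_u(H)\Rightarrow \tilde{m}(H-k)\ge 0$). Given $X\in\rT^*\tM$ and a pre-orbit $\uY=(Y_n)$ from $X$ to $X$, Legendre duality on each continuous Hamiltonian-orbit piece gives $A_{\tH-k}(Y_n)=\sum_i A_{\widetilde L+k}\bigl(q_n|_{[T_n^i,T_n^{i+1}]}\bigr)$ with $q_n:=\pi Y_n$. I close up these pieces into a genuine closed loop $\hat\gamma_n\subset\tM$ at $\pi X$ by prepending a length-minimising geodesic $\pi X\leadsto q_n(0)$, appending one $q_n(T_n)\leadsto\pi X$, and bridging each interior jump $q_n(T_n^i-)\leadsto q_n(T_n^i)$ by a minimising geodesic. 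The total added length is $\le d(\pi X,q_n(0))+d(\pi X,q_n(T_n))+\D(Y_n)\to 0$, and since $\widetilde L+k$ is bounded on unit-speed tangent vectors the corresponding extra action is $o(1)$. Definition~(2) of $c_u(H)$ yields $A_{\widetilde L+k}(\hat\gamma_n)\ge 0$ for the closed loop $\hat\gamma_n$ whenever $k\ge c_u(H)$; hence $A_{\tH-k}(Y_n)\ge -o(1)$, and passing to $\liminf$ gives $A_{\tH-k}(\uY)\ge 0$, so $\tilde{m}(H-k)\ge 0$.

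\emph{Lower bound} ($k<c_u(H)\Rightarrow \tilde{m}(H-k)=-\infty$). By definition~(2) there is a loop $\sigma_0:[0,T_0]\to\tM$ at some $\tx$ with $A_{\widetilde L+k}(\sigma_0)<0$. Iterating, $h_{mT_0}(\tx,\tx)\le m\,A_{\widetilde L}(\sigma_0)$, so $h^k(\tx,\tx)\le\liminf_m m\,A_{\widetilde L+k}(\sigma_0)=-\infty$. By Proposition~\ref{coincidencedefinition}, this supplies $P^-,P^+\in\rT^*_\tx\tM$ with $B_{\tH-k}(P^-,P^+)=-\infty$, realised by lifting a sequence of Tonelli minimising loops at $\tx$ (their velocities are uniformly bounded by the a-priori estimates \cite[Lemma 3-2.1]{CI99} used in Theorem~A, so the endpoints converge up to subsequence). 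If $P^-=P^+$ we conclude $B_{\tH-k}(P^-,P^-)=-\infty$ directly. Otherwise, by the dichotomy it suffices to produce a single pre-orbit from some $X$ to itself with strictly negative action, which I construct by concatenating the pre-orbit from $P^-$ to $P^+$ with a return-leg pre-orbit from $P^+$ back to $P^-$; since $\pi P^+=\pi P^-=\tx$, such a return leg may be built from a Tonelli minimising segment $\tx\leadsto\tx$ lifted to $\rT^*\tM$ and joined to $P^\pm$ at either end by vanishing jumps within the compact energy layer above $\tx$.

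\emph{Main obstacle}. The delicate point is the construction of this return-leg pre-orbit from $P^+$ to $P^-$ when $P^-\neq P^+$: generic Hamiltonian flow segments accumulate unbounded action over long times, and the constraint $\D(Y_n)\to 0$ rules out arbitrary teleportation, so making the jump sizes vanish while the total time tends to infinity requires careful use of the compactness of the energy layer above $\tx$ and adapts Bernard's compact-case argument from \cite{B07} to this non-compact cover. An alternative indirect route bypassing this explicit construction is to observe that $\tilde{m}(H-k)\ge 0$ would, via a Bernard-style barrier-to-subsolution procedure, yield a viscosity subsolution of $\tH(x,d_xu)=k$, directly contradicting the PDE characterisation~(5) of $c_u(H)$ whenever $k<c_u(H)$.
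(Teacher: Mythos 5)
Your overall architecture --- the dichotomy $\tilde m(H-k)\in\{-\infty\}\cup[0,+\infty]$ obtained by iterating a negative closed pre-orbit, followed by two one-sided bounds --- is the right one, and it is essentially what the paper's one-line proof (``follows from the definition of $c_u$ and Proposition~\ref{coincidencedefinition}'') compresses. Your upper bound is correct; in fact it just re-derives the inequality $B_{\tH-k}(P,P')\geq h^k(q,q')$ contained in the first half of Proposition~\ref{coincidencedefinition} and combines it with $h^k(q,q)\geq 0$ for $k\geq c_u(H)$, so you could simply quote that inequality. (A side remark on the statement itself: as written, $\{k:\tilde m(H-k)>-\infty\}$ is an upward ray whose supremum is $+\infty$; the first set must be read as $\{k:\tilde m(H-k)=-\infty\}$, the analogue of variational definition (2), which is how your argument effectively treats it, although your claim that ``the two sets coincide'' sits uneasily with then taking the sup of one and the inf of the other.)

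The genuine gap is in your lower bound, and it is exactly the step you flag as the ``main obstacle''. Fixed-endpoint Tonelli minimizers from $\tx$ to $\tx$ lift to orbit segments whose initial and final covectors $P^-\neq P^+$ in general differ, and there is no reason for a pre-orbit from $P^+$ back to $P^-$ to exist: the jumps must vanish while $T_n\to\infty$, so one would need a chain-recurrence-type connection inside the energy level, which is simply not available. The indirect route via subsolutions does not bypass this either, since $\tilde m(H-k)\geq 0$ only controls $B_{\tH-k}$ on the diagonal, whereas $h^k(q,q)=\min_{P,P'}B_{\tH-k}(P,P')$ involves off-diagonal pairs --- the same difficulty in disguise. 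The fix is the device used repeatedly elsewhere in the paper (proofs of Theorem A, Proposition~\ref{thm1}, Theorem~\ref{thm3}): apply Tonelli's theorem on the \emph{free} loop space of contractible loops of period $mT_0$ in $M$, rather than the fixed-endpoint problem. This yields genuine \emph{periodic} orbits $\tau_m$ with $A_{L+k}(\tau_m)\leq m\,A_{L+k}(\sigma_0)=-m\e$. A contractible periodic orbit lifts to a closed curve in $\rT^*\widetilde{M}$, so $Y_m(0)=Y_m(T_m)$ exactly; translating the base points into a fixed compact fundamental domain by deck transformations and using the a priori estimates (bounded action per unit time gives a point of bounded velocity, hence bounded energy, hence bounded velocities everywhere), one extracts a subsequence with $Y_m(0)\to X$. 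The sequence $(Y_m)$ is then a pre-orbit from $X$ to itself with no discontinuities, equibounded energy and $A_{\tH-k}(Y_m)\leq-m\e\to-\infty$, whence $B_{\tH-k}(X,X)=-\infty$ and $\tilde m(H-k)=-\infty$. With this replacement your proof closes.
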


The proof simply follows from the definition of $c_u(H)$ (see subsection \ref{sec1.2} and (\ref{criticalvalues})) and Proposition \ref{coincidencedefinition}.\\

Proposition \ref{thm1}, \ie the symplectic invariance of $c_u(H)$, can now be proved in a different way using this new characterization of $c_u(H)$ and  Proposition \ref{symplectomorphism}.\\

Let us prove now the following result.\\

\begin{Prop}
$\cA^*_{\widetilde{H}} = \{P\in \rT^*\widetilde{M}:\; B_{\tH-c_u(H)}(P,P)=0\}$.\\
\end{Prop}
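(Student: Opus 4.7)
The plan is to prove both inclusions, leveraging Proposition \ref{coincidencedefinition} (which relates $h$ to the phase-space barrier $B$) and Fathi's characterization of the Aubry set via differentiability of critical subsolutions and the Peierls barrier.

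For the inclusion $\supseteq$, I would take $P=(x,p)\in \rT^*\widetilde{M}$ with $B_{\widetilde{H}-c_u(H)}(P,P)=0$ and first show that $x\in\cA_{\widetilde{H}}$. By Proposition \ref{coincidencedefinition},
\[
h_{\widetilde{H}}(x,x)=\min_{P_0\in\rT^*_x\widetilde{M},\ P_1\in\rT^*_x\widetilde{M}}B_{\widetilde{H}-c_u(H)}(P_0,P_1)\leq B_{\widetilde{H}-c_u(H)}(P,P)=0,
\]
and the reverse inequality $h_{\widetilde{H}}(x,x)\geq 0$ follows from the variational definition of $c_u(H)$ (no contractible loop has action below $-c_u(H)T$). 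Hence $x\in\cA_{\widetilde{H}}$, and the pair $(P,P)$ actually realizes the minimum in (\ref{coincidencebarriers}). The last statement of Proposition \ref{coincidencedefinition} then says that $P$ is a superdifferential of $h_{\widetilde{H}}(\cdot,x)$ at $x$ and $-P$ is a superdifferential of $h_{\widetilde{H}}(x,\cdot)$ at $x$. Since $h_{\widetilde{H}}(x,\cdot)$ is a global viscosity solution of $\widetilde{H}(y,d_y u)=c_u(H)$ (by Fathi, valid also in the non-compact setting as long as the barrier is finite) and $x\in\cA_{\widetilde{H}}$, this viscosity solution is differentiable at $x$; hence its unique superdifferential there is $d_x h_{\widetilde{H}}(x,\cdot)=\partial_2 h_{\widetilde{H}}(x,x)$. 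Therefore $p=\partial_2 h_{\widetilde{H}}(x,x)$ and $P\in\cA^*_{\widetilde{H}}$.

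For the inclusion $\subseteq$, I would take $P=(x,\partial_2 h_{\widetilde{H}}(x,x))\in\cA^*_{\widetilde{H}}$, so $h_{\widetilde{H}}(x,x)=0$. By Proposition \ref{coincidencedefinition} the minimum of $B_{\widetilde{H}-c_u(H)}(P_0,P_1)$ over $(P_0,P_1)\in\rT^*_x\widetilde{M}\times\rT^*_x\widetilde{M}$ equals $0$ and is attained at some pair $(P^*_0,P^*_1)$ with $P^*_0$ a superdifferential of $h_{\widetilde{H}}(\cdot,x)$ at $x$ and $-P^*_1$ a superdifferential of $h_{\widetilde{H}}(x,\cdot)$ at $x$. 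As above, differentiability of $h_{\widetilde{H}}(x,\cdot)$ at $x\in\cA_{\widetilde{H}}$ forces $P^*_1=\partial_2 h_{\widetilde{H}}(x,x)=p$. The same argument applied to the function $h_{\widetilde{H}}(\cdot,x)$, which is also a viscosity solution and differentiable at points of $\cA_{\widetilde{H}}$ with differential equal to $p$ (by symmetry of the calibration at Aubry points, or equivalently by the graph property), identifies $P^*_0=P$ as well. Thus $B_{\widetilde{H}-c_u(H)}(P,P)=0$.

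The main obstacle is the last identification, namely controlling both superdifferentials and showing that the minimizing pair in (\ref{coincidencebarriers}) must be the diagonal $(P,P)$ with $p=\partial_2 h_{\widetilde{H}}(x,x)$. In the compact case this is standard Aubry–Mather theory; here it requires that the relevant results on differentiability of $h_{\widetilde{H}}(x,\cdot)$ at Aubry points, uniqueness of the calibrating momentum, and symmetry under swapping the two arguments persist in the non-compact lift. These follow from Fathi's theory once $h_{\widetilde{H}}$ is known to be finite (Lemma \ref{lemmafinitepeierls}) and the a priori velocity bounds of \cite[Lemma 3-2.1]{CI99} are applied to the minimizers produced by Tonelli's theorem on $\widetilde{M}$, exactly as used throughout Section \ref{sec4}. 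With these ingredients in place, the argument reduces to a direct application of Proposition \ref{coincidencedefinition}.
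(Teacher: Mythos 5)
Your proof is correct and follows essentially the same route as the paper: both inclusions are derived from Proposition \ref{coincidencedefinition} together with the differentiability of the Peierls barrier at points of the projected Aubry set, identifying the minimizing pair in (\ref{coincidencebarriers}) with the diagonal point $(x,\partial_2 h_{\widetilde{H}}(x,x))$. The only wrinkle is the sign bookkeeping for which of $\pm P$ is the superdifferential of $h_{\widetilde{H}}(x,\cdot)$ versus $h_{\widetilde{H}}(\cdot,x)$, an ambiguity already present in the paper's own statement and proof, and which does not affect the argument since $\partial_2 h_{\widetilde{H}}(x,x)=-\partial_1 h_{\widetilde{H}}(x,x)$ on the Aubry set.
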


\begin{Rem}
Observe that now Theorem \ref{invarianceAubry} will follow from this proposition and Proposition \ref{symplectomorphism}.\\
\end{Rem}

\begin{Proof}
[$\supseteq$] Let $P\in \rT^*\widetilde{M}$ such that $B_{\tH-c_u(H)}(P,P)=0$ and let us denote by $q=\pi (P)$. Hence, $h_{\widetilde{H}}(q,q)\leq 0$ and therefore $h_{\widetilde{H}}(q,q)= 0$. This implies that
$q\in \cA_{\widetilde{H}}$ and consequently $h_{\widetilde{H}}(q,\cdot)$ is differentiable at $q$ and 
$(q, \partial_2h_{\widetilde{H}}(q,q)) \in \cA^*_{\widetilde{H}}$.
Now, recall from Proposition \ref{coincidencedefinition} that  since $B_{\tH-c_u(H)}(P,P)=h_{\widetilde{H}}(q,q)  $, then $P$ is a superdifferential of $h_{\tH}(q,\cdot)$ at $q$ and hence $P=\partial_2h_{\widetilde{H}}(q,q)$.

[$\subseteq$] Let $P\in \cA^*_{\tH}$, then $h_{\tH}(q,q)=0$, where $q=\pi(P)$, and $h_{\tH}(q,\cdot)$ and $h(\cdot, q)$ are both differentiable at $q$. Therefore, $P=\partial_2h_{\tH}(q,q)= -\partial_1h_{\tH}(q,q)$. In particular we know from Proposition \ref{coincidencedefinition} that if $X, X'\in \rT^*\widetilde{M}$ are such that $B_{\tH-c_u(H)} = h_{\tH}(q,q)$, then $-X$ is a superdifferential of  $h_{\tH}(q,\cdot)$ at $q$ and 
$X'$ is superdifferential of $h_{\tH}(\cdot,q)$ at $q$. Hence, $X=X'=\partial_2h_{\tH}(q,q)$, which concludes the proof.\\
\end{Proof}

\noindent {\bf Question IV}: 
Similar questions might be also asked for the Ma\~n\'e set associated to $\widetilde{H}$: is it true that it is symplectic invariant? The main problem in proving this is represented by the fact that, differently from what happens in the compact case,  in our setting the Aubry set might be empty. However, if the Aubry set is non-empty, then the proof would follow essentially what already done in Bernard's article \cite[Section 2.10 and Corollary 3.7]{B07}.

\section{Some examples} \label{examples}

In this section we determine the Aubry set of the universal cover in some examples.
We also exhibit invariant measures with zero homotopy in cases where $c_u<c_a$.
Before we describe the examples it is convenient to state and prove a simple lemma that will allow
us to compute the Peierls barrier.

Let $M$ be a closed manifold with a Tonelli Lagrangian $L$ and fix $T>0$. By Tonelli's theorem, there exists a closed contractible orbit $\tau:[0,T]\to M$ which minimizes the action $A_{L}$ over the free loop space of closed contractible curves defined on $[0,T]$.  As before let $\Pi_{u}:\widetilde{M}\to M$ be the universal cover.

\begin{Lem} Let $x\in\widetilde{M}$ be such that $\Pi_{u}(x)\in \tau([0,T])$. Then
\[h_{\widetilde{H}}^T(x,x)=A_{L}(\tau).\]
\label{lemma:aux}
\end{Lem}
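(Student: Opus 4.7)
The plan is to translate the definition of $h_{\widetilde{H}}^T(x,x)$ into a minimization problem downstairs on $M$ using the lifting properties of the universal cover, and then exploit the periodicity of $\tau$ to shift its basepoint.

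First I would observe that $\widetilde{M}$ being simply connected gives a bijective correspondence between absolutely continuous curves $\widetilde{\gamma}:[0,T]\to\widetilde{M}$ with $\widetilde{\gamma}(0)=\widetilde{\gamma}(T)=x$ and absolutely continuous \emph{contractible} loops $\sigma:[0,T]\to M$ based at $\Pi_u(x)$. Moreover, since $\widetilde{L}=L\circ d\Pi_u$, the actions of corresponding curves agree. Thus
\[
h_{\widetilde{H}}^T(x,x) \;=\; \inf\bigl\{A_L(\sigma):\sigma:[0,T]\to M \text{ contractible loop with } \sigma(0)=\sigma(T)=\Pi_u(x)\bigr\}.
\]
From this expression the inequality $h_{\widetilde{H}}^T(x,x)\geq A_L(\tau)$ is immediate, since $\tau$ minimizes the action over the strictly larger class of all contractible loops of period $T$ (no basepoint constraint).

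For the reverse inequality I would use the structure of $\tau$. Being a Tonelli minimizer over the free loop space of contractible curves parametrized by $[0,T]$, the first variation argument (applied to variations of the common value $\tau(0)=\tau(T)$) forces $\dot\tau(0)=\dot\tau(T)$, so $\tau$ is in fact a $T$-periodic orbit of the Euler--Lagrange flow and extends smoothly to $\R/T\Z$. Write $\Pi_u(x)=\tau(t_0)$ for some $t_0\in[0,T]$, which is possible by hypothesis, and define the time-shifted loop $\tau_{t_0}(t):=\tau(t+t_0\bmod T)$. Then $\tau_{t_0}$ is a contractible (same homotopy class as $\tau$) smooth loop based at $\Pi_u(x)$, and since the integrand is evaluated over one full period,
\[
A_L(\tau_{t_0})=\int_0^T L(\tau_{t_0}(t),\dot\tau_{t_0}(t))\,dt = \int_0^T L(\tau(s),\dot\tau(s))\,ds = A_L(\tau).
\]
Plugging $\tau_{t_0}$ into the variational characterization above yields $h_{\widetilde{H}}^T(x,x)\leq A_L(\tau_{t_0})=A_L(\tau)$, completing the proof.

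The only mildly delicate point is justifying that $\tau$ is a genuine periodic orbit so that the time shift $\tau_{t_0}$ is well-defined and smooth; this is a standard consequence of $\tau$ being a free-period Tonelli minimizer, and I do not expect any serious obstacle beyond recording this observation carefully.
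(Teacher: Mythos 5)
Your proof is correct and follows essentially the same route as the paper: project loops based at $x$ down to contractible loops to get $h^T_{\widetilde{H}}(x,x)\geq A_L(\tau)$, and lift a basepoint-shifted copy of $\tau$ through $x$ to get the reverse inequality. Your extra care about $\dot\tau(0)=\dot\tau(T)$ is not actually needed, since the infimum defining $h^T_{\widetilde{H}}$ is taken over absolutely continuous curves and the shifted loop $\tau_{t_0}$ is absolutely continuous with the same action regardless of whether the velocities match at the junction.
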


\begin{proof} If $\gamma:[0,T]\to\widetilde{M}$ is an absolutely continuous loop based at $x$, then
obviously $\Pi_{u}\circ\gamma$ is a closed contractible curve in $M$ and
\[A_{L}(\gamma)\geq A_{L}(\tau).\]
Since $\tau$ lifts to a closed loop based at $x$, then the lemma follows immediately.

\end{proof}

All the Lagrangians considered here have the form 
\[L(x,v)=\frac{1}{2}|v|_{x}^2+\theta_{x}(v)\]
for some Riemannian metric $|\cdot|_{x}$ and a smooth 1-form $\theta$. The corresponding Hamiltonian
is $H(x,p)=\frac{1}{2}|p-\theta_{x}|^2_{x}$.  These examples have already been considered in
\cite{CFP} but their Aubry sets and Peierls barriers were not computed there.

\subsection{Example with $c_u=c_a$ but $\cA_{\widetilde{H}} =\emptyset$}
Let $G$ be the 3-dimensional Heisenberg group of matrices
\[\left(\begin{array}{ccc}

1&x&z\\
0&1&y\\
0&0&1\\

\end{array}\right),\]
where $x,y,z\in \R$. If we identify $G$ with $\R^3$, then the product is
\[(x,y,z)\star(x',y',z')=(x+x',y+y',z+z'+xy').\]
We let $\Gamma$ be the lattice of those matrices with
$x,y,z\in\Z$. Then $M=\Gamma\setminus G$ is a closed 3-dimensional nilmanifold.
We consider the Lagrangian
\[L=\frac{1}{2}(\dot{x}^2+\dot{y}^2+(\dot{z}-x\dot{y})^2)+\dot{z}-x\dot{y}.\]
It is easy to check that $L$ is invariant under the left action of $G$, hence it descends to $M$.
Various properties of this systems were proved in \cite{CFP}. Here we need:
\begin{enumerate}
\item $c_u=c_a=1/2$ (\cite[Lemma 6.8]{CFP});
\item there is a closed contractible orbit with energy $k>0$ if and only if $k<1/2$.
Moreover the (prime) closed contractible orbits with energy $k$ have $A_{L+k}$-action
equal to $2\pi(1-\sqrt{1-2k})$ and period $T=2\pi/(\sqrt{1-2k})$, see \cite[Lemma 6.7]{CFP} (with the notation of \cite{CFP},  $\Omega(v)$ corresponds precisely to the $A_{L+k}$-action, as it is easy to check).
\end{enumerate}

We now show:

\begin{Lem} For any $x\in\R^3$, $h_{\widetilde{H}}(x,x)=2\pi$.
\end{Lem}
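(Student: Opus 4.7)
The plan is to compute the Peierls barrier explicitly by combining Lemma \ref{lemma:aux}, the left-$G$-invariance of the lifted Lagrangian $\widetilde{L}$, and the explicit description of closed contractible orbits provided in item (2).

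First I would note that the left action of $G$ on $\widetilde{M}=G$ is transitive and preserves $\widetilde{L}$, so both $h_{\widetilde{H}}^T(x,x)$ and $h_{\widetilde{H}}(x,x)$ are constant in $x\in\widetilde{M}$. For each $T>0$, fix a Tonelli minimizer $\tau_T:[0,T]\to M$ of $A_L$ over closed contractible loops of period $T$. By invariance I may choose a lift $x$ of any point of $\tau_T([0,T])$, and Lemma \ref{lemma:aux} then gives
$$h_{\widetilde{H}}(x,x)=\liminf_{T\to+\infty}\bigl[A_L(\tau_T)+T/2\bigr]\quad\text{for every }x.$$

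Next I would identify $\tau_T$ for large $T$. By Tonelli's theorem it is a closed contractible Euler--Lagrange orbit of period $T$, and by item (2) every such orbit is either constant or an $n$-fold iterate ($n\geq 1$) of some prime orbit of energy $k\in(0,1/2)$ with prime period $T_0(k)=2\pi/\sqrt{1-2k}$. Substituting $s_0=2\pi n/T$ into the formula $A_{L+k}=2\pi(1-\sqrt{1-2k})$ from item (2), a short calculation gives the $A_L$-action of the $n$-fold iterate of total period $T$ as
$$A_L^{(n)}=2\pi n-\tfrac{2\pi^2n^2}{T}-\tfrac{T}{2}.$$
The main obstacle---and the technical heart of the proof---is to show that among the admissible choices, the prime orbit of period $T$ (i.e.\ $n=1$) is the strict minimiser. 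For $n\geq 2$ the iterate exists only when $T\geq 2\pi n$, and the comparison $A_L^{(n)}-A_L^{(1)}=2\pi(n-1)\bigl[1-\pi(n+1)/T\bigr]$ is strictly positive since $\pi(n+1)/T\leq(n+1)/(2n)\leq 3/4$; constant loops have action $0$ while $A_L^{(1)}=2\pi-\pi(s+1/s)<0$ by AM--GM whenever $s=2\pi/T\in(0,1)$, i.e.\ $T>2\pi$. Hence for $T>2\pi$, $\tau_T$ is the prime orbit of period $T$.

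Finally, plugging $n=1$ (equivalently $s=2\pi/T$) into the action formula gives
$$A_L(\tau_T)+T/2=2\pi-\tfrac{2\pi^2}{T},$$
whose limit as $T\to+\infty$ equals $2\pi$, yielding $h_{\widetilde{H}}(x,x)=2\pi$ for every $x\in\R^3$.
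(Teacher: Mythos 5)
Your proposal is correct and follows essentially the same route as the paper: reduce via $G$-invariance and Lemma \ref{lemma:aux} to computing the $A_L$-action of the Tonelli minimizer $\tau_T$, enumerate the contractible closed orbits of period $T$ (constants, the prime orbit, and its iterates) using item (2), show the prime orbit wins for $T$ large, and pass to the limit. The only difference is that you make the comparison between the prime orbit and its $n$-fold iterates fully explicit by factoring $A_L^{(n)}-A_L^{(1)}=2\pi(n-1)\bigl[1-\pi(n+1)/T\bigr]$ and bounding $\pi(n+1)/T\le 3/4$, whereas the paper simply asserts the inequality ``for $T$ large''; your version is tighter but the content is the same.
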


\begin{proof} Since $G$ acts transitively,  the function $x\mapsto h_{\widetilde{H}}(x,x)$ is constant.

Let $\tau_{T}$ be one of the prime closed orbits described in item (2) above. Then
\[A_{L+1/2}(\tau_{T})=A_{L+k}(\tau_{T})+(1/2-k)T= {2\pi \left(1-\frac{\pi}{T} \right)}.\]
Using item (2) above, we can list all closed contractible orbits with period $T$:
\begin{itemize}
\item constant curves defined on $[0,T]$;
\item $\tau_T$;
\item iterates $n\tau_{T/n}$ where $n$ is a positive integer such $n\leq T/2\pi$ (the reason for this latter condition comes from the fact that, as remarked before, only energy levels with $k<1/2$ contain such orbits and their periods are determined by the energy itself).
\end{itemize}

The constant curves have $A_{L+1/2}$-action
equal to $T/2$ and the iterates have action $A_{L+1/2}(n\tau_{T/n})={2\pi n \left(1-\frac{n\pi}{T}\right)}$.
Hence for $T$ large the $\tau_{T}$ are the Tonelli minimizers of the action on the free
loop space; {in fact, if $T$ is large,
$2\pi n \left(1-\frac{n\pi}{T}\right)>2\pi\left(1-\frac{\pi}{T}\right)$ for $1<n\leq T/2\pi$.
}
By Lemma \ref{lemma:aux} we conclude that for all $T$ large
\[h_{\widetilde{H}}^T(x,x)+T/2={2\pi \left(1-\frac{\pi}{T} \right)}\]
and the lemma follows by letting $T$ go to infinity.
\end{proof}

Besides showing that $h_{\widetilde{H}}<\infty$ this is also shows that $\cA_{\widetilde{H}} =\emptyset$ as claimed. On the other hand, on the abelian cover $\overline{M}$, we have:

\begin{Lem}
 $\cA_{\overline{H}}=\overline{M}$.
\end{Lem}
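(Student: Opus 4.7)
The plan is to combine a general lower bound coming from the variational definition of $c_{a}$ with an explicit family of closed loops in $\overline{M}$, based at any prescribed point, whose $A_{L+c_{a}}$-action vanishes identically. Since the Peierls barrier turns out to be non-negative on $\overline{M}$, this will force $h_{\overline{H}}(\bar q,\bar q)=0$ for every $\bar q\in\overline{M}$, which is precisely $\cA_{\overline{H}}=\overline{M}$.

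For the upper bound I would exhibit the loops as follows. Given any $q=(x_{0},y_{0},z_{0})\in G=\widetilde{M}$ and any integer $N\geq 1$, consider the curve $\gamma_{N}\colon[0,N]\to G$ given by $\gamma_{N}(t)=(x_{0},y_{0},z_{0}-t)$. A direct check of the Euler--Lagrange equations for the explicit formula for $L$ shows that $\gamma_{N}$ is an orbit of the Lagrangian flow (the only non-obvious EL equation to check is the one in $y$, but it holds because $\dot z - x\dot y + 1\equiv 0$ along the curve). Since $(x_{0},y_{0},z_{0}-N)=(x_{0},y_{0},z_{0})\star(0,0,-N)$ and $(0,0,-N)$ lies in the commutator subgroup $[\Gamma,\Gamma]$ (which for the integer Heisenberg group is generated by $(0,0,1)$), the curve descends to a closed loop in $M$ that is null-homologous, hence lifts to a closed loop in $\overline{M}$ based at the image $\bar q$ of $q$ under $\Pi_{a}$. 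Along $\gamma_{N}$ one has $\dot{x}=\dot{y}=0$ and $\dot{z}=-1$, so $L(\gamma_{N},\dot{\gamma}_{N})=\tfrac{1}{2}-1=-\tfrac{1}{2}$, giving $A_{L+c_{a}}(\gamma_{N})=-\tfrac{N}{2}+c_{a}N=0$ using $c_{a}=\tfrac{1}{2}$. Thus $h^{N}_{\overline{H}}(\bar q,\bar q)+c_{a}N\leq 0$ for every $N$, and letting $N\to\infty$ yields $h_{\overline{H}}(\bar q,\bar q)\leq 0$.

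For the lower bound, every closed loop $\sigma\colon[0,T]\to \overline{M}$ projects to a null-homologous loop in $M$, and by the variational definition of $c_{a}=c(H,\overline{M})$ (item 2 in Section \ref{sec1.2} applied to $\overline{H}$) any such loop satisfies $A_{\overline{L}+c_{a}}(\sigma)\geq 0$. Infimising over loops based at $\bar q$ of period $T$ and taking the $\liminf$ as $T\to\infty$ then gives $h_{\overline{H}}(\bar q,\bar q)\geq 0$. Combined with the upper bound, this produces $h_{\overline{H}}(\bar q,\bar q)=0$ for every $\bar q\in\overline{M}$, completing the argument.

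The only real subtlety is the guess that the $z$-translation curves are the right test loops; once noticed, the computation reduces to a single Lagrangian evaluation. Unlike the universal cover case (where one needed the detailed classification of contractible closed orbits and their actions from \cite{CFP}), in the abelian cover the centre of $\Gamma$ becomes homologically trivial and these much simpler loops are available, so I do not expect any genuine obstacle.
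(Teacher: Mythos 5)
Your proof is correct and takes essentially the same approach as the paper: both use the straight-line curves $t\mapsto(x_0,y_0,z_0-t)$, which have zero $A_{L+1/2}$-action and descend to closed loops in $\overline{M}=Z\backslash G$ since $(0,0,1)$ generates the center (equivalently, the commutator subgroup) of $\Gamma$. Your additional remarks on the non-negativity of the Peierls barrier and the commutator-subgroup identification merely spell out steps the paper leaves implicit.
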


\begin{proof} Let $Z\subset\Gamma$ be the center of $\Gamma$. It consists of all elements of the form
$(0,0,n)$ for $n\in\Z$. Then the abelian cover $\overline{M}=Z\setminus G$.  Note that
\[L+1/2=\frac{1}{2}(\dot{x}^2+\dot{y}^2+(\dot{z}-x\dot{y}+1)^2)\]
hence the curves $t\mapsto (x,y,z-t)$ are solutions to the Euler-Lagrange equations and have energy $1/2$. They project to closed curves in $\overline{M}$ with period $1$ and have zero $A_{\overline{L}+1/2}$-action.
It follows that $h_{\overline{H}}(p,p)=0$ for all $p\in \overline{M}$.

\end{proof}

\subsection{Example with $c_u<c_a$ and $\cA_{\widetilde{H}} =\emptyset$}
In \cite{PP97} the authors provided an example of a Tonelli Lagrangian on a closed orientable surface of genus two for which $c_u<c_a$. It is possible to construct many other examples of this kind also in higher dimension, as it was shown in \cite{CFP}.
Here we discuss a homogeneous example considered in \cite[Section 6.3]{CFP}.

We identify $PSL(2,\R)$ with $S\H$, the unit sphere bundle of the hyperbolic plane $\H:=\R\times (0,+\infty)$ with the usual Poincar\'e metric of curvature $-1$, given by:
$ds^2 = \frac{1}{y^2} (dx^2 + dy^2)$. 
We consider a cocompact lattice $\Gamma$ and we let $M:=\G \backslash PSL(2,\R)$.

We consider coordinates $(x,y,\theta)$  in $S\H$, where $(x,y)$ represents points in $\H$, while $\theta$ parametrizes the circle fibres. Moreover, we endow $S\H$ with its Sasaki metric:
$$
ds^2 = \frac{1}{y^2} (dx^2 + dy^2 + (yd\theta  + dx)^2).
$$
The 1-form $d\theta+\frac{dx}{y}$ is left-invariant, hence the following Lagrangian is also left-invariant
\[L=\frac{1}{2y^2}(\dot{x}^2+\dot{y}^2+(y\dot{\theta}+\dot{x})^2)+\dot{\theta}+\frac{\dot{x}}{y}\]
and therefore it descends to $M$.

Various properties of this systems were proved in \cite{CFP}. Here we need:
\begin{enumerate}
\item $c_u=1/4$ and $c_a=1/2$ (\cite[Lemma 6.11]{CFP});
\item there is a closed contractible orbit with energy $k>0$ if and only if $k<1/4$.
Moreover the (prime) closed contractible orbits with energy $k$ have $A_{L+k}$-action
equal to $\pi(1-\sqrt{1-4k})$ and period $T=2\pi/(\sqrt{1-4k})$, see \cite[Lemma 6.14]{CFP}.
\end{enumerate}

Observe that $PSL(2,\R)=S\H$ is not simply connected; this will cause no problem though.

We now show:

\begin{Lem} For any $x\in \widetilde{S\H}$, $h_{\widetilde{H}}(x,x)=\pi$.
\end{Lem}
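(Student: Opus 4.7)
The plan is to repeat the template used for the Heisenberg example, now applied to the hyperbolic cocompact lattice setting. First, since the lift of $L$ is invariant under the action of the universal cover of $PSL(2,\R)$, which acts transitively on $\widetilde{S\H}$, the function $x\mapsto h_{\widetilde{H}}(x,x)$ is constant; so it suffices to evaluate at one convenient base point. Pick a point $x\in\widetilde{S\H}$ projecting into the image of any prescribed closed contractible Tonelli minimizer $\tau:[0,T]\to M$ on the free loop space of contractible $T$-periodic loops. By Lemma \ref{lemma:aux}, $h_{\widetilde{H}}^T(x,x)=A_{L}(\tau)$.

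Next, for each $T>2\pi$, I would enumerate the closed contractible orbits of period $T$, using item $(2)$: they are either the constant curves, the prime closed orbit $\tau_{T}$ with energy $k=\tfrac{1}{4}-\tfrac{\pi^{2}}{T^{2}}$, or an $n$-fold iterate $n\tau_{T/n}$ with $1<n<T/(2\pi)$ (this constraint coming from the fact that only $k<1/4$ admits closed contractible orbits and their period is determined by $k$). A short substitution using $\sqrt{1-4k}=2\pi/T$ gives
\begin{align*}
A_{L+1/4}(\tau_{T}) &= A_{L+k}(\tau_{T})+(1/4-k)T \\
&= \pi\!\left(1-\tfrac{2\pi}{T}\right)+\tfrac{\pi^{2}}{T} = \pi\!\left(1-\tfrac{\pi}{T}\right),
\end{align*}
while the constants contribute $T/4$ (since $L$ vanishes at zero velocity) and the $n$-iterates contribute $n\pi\!\left(1-\tfrac{n\pi}{T}\right)$.

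The main, but mild, obstacle is verifying that for $T$ sufficiently large the prime orbit $\tau_{T}$ beats every other candidate. The constant is easily beaten since $T/4>\pi$ once $T>4\pi$. For the iterates the inequality $n\pi(1-n\pi/T)>\pi(1-\pi/T)$ reduces to $T>\pi(n+1)$, which holds uniformly in the allowed range $1<n<T/(2\pi)$ as soon as $T$ is large. Consequently, for all large $T$,
\[
h_{\widetilde{H}}^{T}(x,x)+\tfrac{T}{4}=\pi\!\left(1-\tfrac{\pi}{T}\right).
\]
Letting $T\to\infty$ and using $c_{u}(H)=1/4$ in the definition of the Peierls barrier yields $h_{\widetilde{H}}(x,x)=\pi$, proving the lemma.
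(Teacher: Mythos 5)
Your proposal is correct and follows essentially the same route as the paper: constancy of $x\mapsto h_{\widetilde{H}}(x,x)$ by transitivity of $\widetilde{PSL}(2,\R)$, the enumeration of contractible closed orbits of period $T$ (constants, $\tau_T$, and iterates $n\tau_{T/n}$), the identical action computations $A_{L+1/4}(\tau_T)=\pi(1-\pi/T)$ and $A_{L+1/4}(n\tau_{T/n})=n\pi(1-n\pi/T)$, the comparison showing $\tau_T$ is the Tonelli minimizer for large $T$, and the passage to the limit via Lemma \ref{lemma:aux}. No gaps worth noting.
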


\begin{proof} Since $\widetilde{PSL}(2,\R)$ acts transitively,  the function 
$x\mapsto h_{\widetilde{H}}(x,x)$ is constant.

Let $\tau_{T}$ be one of the prime closed orbits described in item (2) above. Then
\[A_{L+1/4}(\tau_{T})=A_{L+k}(\tau_{T})+(1/4-k)T=\pi-\frac{\pi\sqrt{1-4k}}{2}=\pi-\frac{\pi^2}{T}.\]
Using item (2) above we can list all closed contractible orbits with period $T$:
\begin{itemize}
\item constant curves defined on $[0,T]$;
\item $\tau_T$;
\item iterates $n\tau_{T/n}$ where $n$ is a positive integer such $n\leq T/2\pi$
(the reason for this latter condition comes from the fact that, as remarked before, only energy levels with $k<1/4$ contain such orbits and their periods are determined by the energy itself).
\end{itemize}

The constant curves have $A_{L+1/4}$-action
equal to $T/4$ and the iterates have action $A_{L+1/4}(n\tau_{T/n})=n\pi(1-\frac{n\pi}{T})$.
But, for $T$ large, $n\pi(1-\frac{n\pi}{T})>\pi(1-\frac{\pi}{T})$ for $1<n\leq T/2\pi$.
Hence for $T$ large the $\tau_{T}$ are the Tonelli minimizers of the action on the free
loop space and by Lemma \ref{lemma:aux} we conclude that for all $T$ large
\[h_{\widetilde{H}}^T(x,x)+T/4=\pi(1-\frac{\pi}{T})\]
and the lemma follows by letting $T$ go to $\infty$.
\end{proof}

As in the previous example, besides showing that $h_{\widetilde{H}}<\infty$ this is also shows that $\cA_{\widetilde{H}} =\emptyset$ as claimed.\\

\vspace{10 pt}

\noindent{\bf Minimizing measures with zero homotopy.} We now describe, in this specific example, all ergodic minimizing invariant measures with zero homotopy. Let $\mu$ be such a measure.
Since $c_u=1/4$ and $\mu$ is ergodic its support must be contained in the energy level $E=1/4$
(cf. Proposition \ref{energymeasure}). 
Recall that the corresponding Hamiltonian vector field is given by (see \cite[Section 6.3]{CFP}):
$$
X_H = \left\{\begin{array}{lll}
\dot{x}= y(y p_x - p_\theta), && \dot{p}_x = p_y,\\
\dot{y}= y^2 p_y, && \dot{p}_y = (-yp_x+p_\theta)(p_x+1/y)-yp_y^2,\\
\dot{\theta}= 2p_\theta-yp_x, && \dot{p}_\theta = 0.\\
\end{array}
\right.
$$

Then, the function $f=\dot{\theta}+\dot{x}/y$ is clearly a first integral of the system hence it must be a constant $a$ on the support of $\mu$. Using that $\mu$ is minimizing and the explicit form of $L=E+f$ we deduce
\[A_{L}(\mu)=-1/4=1/4+a\]
and thus $a=-1/2$. To describe the flow for $k=1/4$ and $a=-1/2$ it is easier to pass to the Hamiltonian
setting and introduce left-invariant coordinates $(x,y,\theta,\pa,\pb,\pc)$  in $T^*PSL(2,\R)$ as in \cite[Section 6.3]{CFP}.  If we let
\begin{align*}
\pa&=(yp_{x}-p_{\theta})\cos\theta+yp_{y}\sin\theta,\\
\pb&=-(yp_{x}-p_{\theta})\sin\theta+yp_{y}\cos\theta,\\
\pc&=p_{\theta}
\end{align*}
then
\[H=\frac{1}{2}(\pa^2+\pb^2+(\pc-1)^2).\]

In terms of these left-invariant coordinates, the Hamiltonian vector field becomes (see \cite[Section 6.3]{CFP}):
$$
X_H = \left\{\begin{array}{lll}
\dot{x}= y(p_\a \cos\theta - p_\beta \sin\theta ),         && \dot{p}_\alpha = 2p_\beta p_\gamma + p_\beta,\\
\dot{y}= y(p_\a \sin\theta + p_\beta \cos\theta ),     && \dot{p}_\beta = -2p_\alpha p_\gamma - p_\alpha,\\
\dot{\theta}= p_\gamma-p_\alpha\cos\theta + p_\beta \sin \theta, && \dot{p}_\gamma = 0.\\
\end{array}
\right.\\
$$

Using the above expressions, a simple calculation now shows that $-a=\pc=1/2$ and that $\pa$ and $\pb$ must be constant if $k=1/4$ and $\pc=1/2$. Hence the orbits of $H$ for $k=1/4$ and $\pc=1/2$ are orbits of the right action of 1-parameter subgroups of $PSL(2,\R)$ determined by $(\pa,\pb,1/2)$ such that $\pa^2+\pb^2=1/4$.  It is straighforward to check that these 1-parameter subgroups are all parabolic ({\it i.e.} horocycle flows). These flows are known to be uniquely ergodic (as proved by H. Furstenberg in  \cite{Furstenberg}),  and the unique invariant probability measure is the normalised Lebesgue measure $\mu_{\pa,\pb}$ on $\Gamma\setminus PSL(2,\R)$. It is easy to check that these measure have zero homotopy: they are weak limits of the probability measures supported on the closed
orbits $\tau_T$ as $k\to 1/4$ (or equivalently as $T\to\infty$). Hence our measure $\mu=\mu_{\pa,\pb}$ for some $(\pa,\pb)$.  Observe that we get a whole circle worth of minimizing measures
with zero homotopy and the union of their supports is {\it not} a graph (the support of each ergodic component is a graph though). This, quite surprisingly, is in contrast with Mather's celebrated graph theorem \cite{Mat91}.

\color{black}
\def\cprime{$'$}

\vspace{1.truecm}

\end{document}